\newcommand{\Lo}{{ \log(\mbox{$\frac{T}{2 \pi}$})}}
\newtheorem{theorem}{Theorem}
 \newtheorem{lemma}[theorem]{Lemma}
 \newtheorem{proposition}[theorem]{Proposition}
  \theoremstyle{remark}
\begin{document}

\title{Extreme values of $\zeta'(\rho)$}
\author{Nathan Ng}
\maketitle
{\def\thefootnote{}
\footnote{\today. \ {\it Mathematics Subject Classification (2000)}.
11M26}}
\begin{abstract}
\noindent
In this article we exhibit small and large values of $\zeta'(\rho)$ by
applying Soundararajan's resonance method.  Our results assume the 
Riemann hypothesis.  
\end{abstract}
\section{Introduction}

Let $\zeta(s)$ denote the Riemann zeta function and let $\rho$
denote a non-trivial zero of this function.   A famous conjecture
due to Riemann asserts that all non-trivial zeros $\rho$ have
real part equal to one-half.  This is the Riemann hypothesis. 
In this article we are concerned with large and small values of $\zeta'(\rho)$.  Note that if $|\zeta'(\rho)|$ were small then 
we would expect a small gap between consecutive zeros of $\zeta(s)$ nearby.
An extreme example of this phenomenon is 
that if $\rho$ is a multiple zero of the zeta function
then $\zeta'(\rho)=0$. On the other hand, 
if $\zeta'(\rho)$ were large we would expect a large gap
between zeros of $\zeta(s)$ nearby.   This has been observed numerically
in Odlyzko \cite{Od}.  Also Soundararajan \cite{So0} has conjectured that
a zero of $\zeta'(s)$ close to the half line would correspond to nearby pair
of close zeros of the zeta function on the half-line.  Recall that 
the phenomenon of a close pair of zeros 
of $\zeta(s)$ is referred to as Lehmer's phenomenon.  
One reason for our interest in such
small spaces between the zeros of zeta is due to their connection
to the non-existence of  Landau-Siegel zeros.  This connection was first
noticed by Montgomery in \cite{Mo} and 
Montgomery and Weinberger in \cite{MW}.
This idea was further
explored by Conrey and Iwaniec in \cite{CI}.
The problem of the true size of $\zeta'(\rho)$ remains an open question.  Under
the Riemann hypothesis, we have by an argument of Littlewood, that there exists $c_0 > 0$ such that 
\[   |\zeta'(\rho)| \ll \exp \left( \frac{c_0 \log |\gamma|}{\log \log |\gamma|}
   \right) 
%   \label{eq:largevals}
\]
where $\gamma = \mathrm{Im}(\rho)$.  This last notation shall be employed
throughout the article.  On the other hand, we are also interested in small values of $|\zeta'(\rho)|$.  Consider $
   \Theta = \inf \{  \  c  \ | \   |\zeta'(\rho)|^{-1} \ll \gamma^{c} \ \} 
$ defined by Gonek \cite{Go} in his study of $M(x)$, the summatory function of 
the M\"{o}bius function.   Since the Riemann hypothesis implies $|\zeta'(\rho)|
\ll |\rho|^{\epsilon}$ one expects that $\Theta \ge 0$. On the other hand,
the GUE conjecture which asserts that the that distribution of consecutive zeros of the zeta function obey the GUE distribution suggests that $\Theta =\frac{1}{3}$ and hence we should have 
\begin{equation}
     |\zeta'(\rho)| \ll \gamma^{-1/3+\epsilon}
%     \label{eq:smallvals}
 \nonumber
\end{equation}
infinitely often. 

In this article we shall produce results exhibiting both large and small values 
of $|\zeta'(\rho)|$.  These results are obtained by a novel idea due to Soundararajan \cite{S}. The method, coined the resonance method, will be explained shortly.   
We begin with the large values result. 

\begin{theorem} \label{large1}  Assume the Riemann hypothesis. 
For each $A > 0$ we have 
\[
   |\zeta'(\rho)| \gg_{A}  (\log |\gamma|)^{A}
\]
for infinitely many $\gamma$.  
\end{theorem}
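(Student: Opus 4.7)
The strategy is to adapt Soundararajan's resonance method to a sum over zeros of $\zeta(s)$. Fix $A > 0$, choose a small parameter $\eta > 0$, and set $N = T^{\eta}$. Introduce a resonator
$$R(s) = \sum_{n \le N} r(n)\, n^{-s}$$
with real nonnegative coefficients $r(n)$ to be specified later. Consider the weighted sums
$$S_1(T) = \sum_{T < \gamma \le 2T} |R(\rho)|^2 \zeta'(\rho), \qquad S_2(T) = \sum_{T < \gamma \le 2T} |R(\rho)|^2.$$
Because $|R(\rho)|^2 \ge 0$, one has
$$\max_{T < \gamma \le 2T} |\zeta'(\rho)| \;\ge\; \frac{|S_1(T)|}{S_2(T)},$$
so the theorem reduces to establishing $|S_1(T)|/S_2(T) \gg_A (\log T)^{A}$ along a sequence $T \to \infty$.

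The next step is to expand the square. Under the Riemann Hypothesis $\overline{\rho} = 1 - \rho$, hence
$$|R(\rho)|^2 \;=\; \sum_{m,n \le N} r(m)\, r(n)\, (mn)^{-1/2} \left(\tfrac{n}{m}\right)^{i\gamma},$$
which reduces both $S_1$ and $S_2$ to twisted zero-sums
$$\Sigma_0(X,T) := \sum_{T < \gamma \le 2T} X^{\rho}, \qquad \Sigma_1(X,T) := \sum_{T < \gamma \le 2T} X^{\rho}\zeta'(\rho),$$
with $X = n/m$ for $m,n \le N$. I would evaluate each by a contour integral,
$$\Sigma_0(X,T) = \frac{1}{2\pi i}\oint_{\mathcal C} X^{s}\frac{\zeta'(s)}{\zeta(s)}\, ds, \qquad \Sigma_1(X,T) = \frac{1}{2\pi i}\oint_{\mathcal C} X^{s}\frac{\zeta'(s)^{2}}{\zeta(s)}\, ds,$$
where $\mathcal C$ is a narrow rectangle enclosing exactly the zeros with $T < \gamma \le 2T$, its right edge on $\Re(s) = 1 + \varepsilon$ and its left edge on $\Re(s) = -\varepsilon$. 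The right edge is handled using the Dirichlet series for $\zeta'/\zeta$ and $(\zeta')^2/\zeta$, the left edge is transformed by the functional equation, and the horizontal pieces are controlled by standard mean-value estimates for $\zeta$.

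For the diagonal $X = 1$ this produces the classical asymptotics
$$\sum_{T<\gamma\le 2T} 1 \;\sim\; \frac{T\log T}{2\pi}, \qquad \sum_{T<\gamma\le 2T}\zeta'(\rho) \;\sim\; \frac{T (\log T)^{2}}{4\pi}$$
(the latter due to Ingham and Gonek), while the off-diagonal $X \ne 1$ contributes smaller oscillatory terms. Following Soundararajan, I would then take $r$ multiplicative, supported on squarefree products of primes in the window $\mathcal P = [L^{2}, \exp((\log L)^{2})]$ with $L \asymp A \log T \log\log T$, and $r(p) = L/(\sqrt{p}\,\log p)$ on $\mathcal P$. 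One checks that $S_{2}$ is dominated by the Euler product $\prod_{p \in \mathcal P}(1+r(p)^{2})$ times the diagonal factor $T\log T/(2\pi)$, while in $S_{1}$ the same product is multiplied by $T(\log T)^{2}/(4\pi)$, with an additional arithmetic gain coming from off-diagonal pairs $(m,n) = (m', p m')$ contributing a factor of order $\sum_{p \in \mathcal P} r(p)^{2}\log p / p$. The balance produces $|S_{1}|/S_{2} \gg_{A} (\log T)^{A}$.

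The principal obstacle is the asymptotic evaluation of $\Sigma_{1}(X,T)$ uniformly in $X \in [N^{-1},N]$. Unlike the classical $\Sigma_{0}(X,T)$ (Landau) or $\sum X^{\rho}/\zeta'(\rho)$ (Gonek), the integrand $X^{s}(\zeta')^{2}/\zeta(s)$ requires a mean-value bound for $(\zeta')^{2}/\zeta$ near the critical line, and it is this ingredient which restricts $N$ to a small power of $T$. Once $\Sigma_0$ and $\Sigma_1$ are controlled with sufficient uniformity in $X$, assembling the resonator sum and optimizing $L$ as above produces the desired large values along a sequence $T \to \infty$, and hence infinitely many $\gamma$.
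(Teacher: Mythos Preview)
Your framework coincides with the paper's: form $S_1=\sum_\gamma \zeta'(\rho)|R(\rho)|^2$ and $S_2=\sum_\gamma |R(\rho)|^2$, and bound $\max_\gamma|\zeta'(\rho)|\ge |S_1|/S_2$. The substantive difference is the resonator. The paper does \emph{not} use a Soundararajan-type multiplicative resonator for Theorem~\ref{large1}; it takes $x_n=\tau_r(n)$ for a fixed integer $r$, invokes Proposition~\ref{prop1}(ii), and evaluates the resulting main terms $T_1,T_2,T_3$ via explicit divisor-sum asymptotics (Lemma~\ref{mult}). This yields $|S_1/S_2|\gg_r(\log T)^{r+1}$, and choosing $r\ge A$ gives the theorem.

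The reason for this choice is precisely the obstacle you flag. Under RH alone, the asymptotic for $S_1$ (equivalently, your uniform evaluation of $\Sigma_1(X,T)=\sum_\gamma X^\rho\zeta'(\rho)$) is only established for coefficients satisfying $|x_n|\ll\tau_r(n)(\log T)^C$ with $r$ \emph{fixed}; see Proposition~\ref{prop1}(ii), whose proof the paper defers to \cite{Ng2}. The Soundararajan resonator that gives the full quadratic-form gain has $x_n=\sqrt n\,f(n)$, which escapes this hypothesis; the paper handles those coefficients only in Theorem~\ref{large2}, and there it must assume the additional large zero-free region conjecture (Proposition~\ref{prop1}(iii)). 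So your plan, as written, either has to revert to divisor-function coefficients---which is what the paper does---or supply a new RH-only evaluation of $\Sigma_1(X,T)$ for multiplicative resonators, which you acknowledge as the principal obstacle but do not resolve.

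A smaller point: your scale $L\asymp A\log T\log\log T$ is far too large. The resonator gain is of order $\exp(cL/\log L)$, so that choice would produce a power of $T$, not $(\log T)^A$; a Soundararajan resonator tuned to produce only $(\log T)^A$ would need $L$ of size roughly $A\log\log T\cdot\log\log\log T$. But even with the correct $L$, the evaluation of $S_1$ under RH alone remains the gap.
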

I would like to note that Soundararajan has informed me that 
he has proven that 
\begin{equation}
    \sum_{0 < \gamma < T} |\zeta'(\rho)|^{2k} \gg_{k}
    T (\log T)^{(k+1)^2}  
    \label{eq:lbm}
\end{equation}
by the lower bound method of Rudnick and Soundararajan \cite{RS1}, \cite{RS2}.
Clearly, (\ref{eq:lbm}) implies Theorem 1. However, as this remains unpublished,
we present our proof of Theorem \ref{large1}.  
Thus under the Riemann hypothesis, the lower bound method \cite{RS1},
\cite{RS2} can give omega results for $\zeta'(\rho)$ of the same strength
as the resonance method \cite{S}.  This stems from the fact that we are 
unable to evaluate a certain weighted sum of $\zeta'(\rho)$ without
making assumptions about the zeros of Dirichlet $L$-functions
(see Proposition \ref{prop1} parts $(ii)$ and $(iii)$ that follow).  
If we are willing to assume an additional hypothesis 
concerning the location of the zeros of Dirichlet $L$-functions 
we can improve Theorem  \ref{large1} significantly and we can 
obtain a result of the same quality as Soundararajan's
results \cite{S}.  We shall require the following: 

\noindent {\bf Large zero-free region conjecture}. 
There exists a positive constant $c_0'$ sufficiently large such that
for each $q \ge 1$ and each character $\chi$ modulo $q$ the Dirichlet $L$-function
$L(s,\chi)$ does not vanish in the region 
\[
     \sigma \ge 1 - \frac{c_0'}{\log \log (q (|t|+4))}
\]
where $s= \sigma + it$. 

Note that this conjecture is significantly weaker than the generalized 
Riemann hypothesis. However, it is a sufficiently strong hypothesis 
to rule out the existence of Siegel  zeros.  
Recall that the classical zero-free region for Dirichlet 
$L$-functions is  $L(s,\chi)$ does not vanish in the region
\[
   \sigma \ge 1 - \frac{c_1}{\log(q (|t|+3))}
\]
for some $c_1 >0$ 
with the possible exception of one simple real zero in the case 
$\chi$ is quadratic.  

\begin{theorem} \label{large2}
Assume the Riemann hypothesis and the large zero-free region conjecture.  
There are arbitrarily large values of $\gamma$
such that 
\[
     |\zeta'(\rho)| \gg 
     \exp \left(c_2  \sqrt{ \frac{\log |\gamma|}{\log \log |\gamma|}
     } \right) 
\]     
where $c_2 = \frac{1}{\sqrt{2}} - \epsilon$ is valid. 
\end{theorem}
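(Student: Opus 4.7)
My plan is to adapt Soundararajan's resonance method from \cite{S} to a discrete sum over zeros. Fix a smooth non-negative cut-off $\Phi$ supported in $[1,2]$, and introduce the resonator
\[
R(t) = \sum_{n \le N} r(n)\, n^{-it}, \qquad N = T^{\t},
\]
with positive coefficients $r(n)$ to be chosen and $\t > 0$ small. Consider the ratio
\[
\mathcal{R}(T) = \frac{\displaystyle\sum_{\g} R(\g)^{2}\, \zeta'(\rho)\, \Phi(\g/T)}{\displaystyle\sum_{\g} R(\g)^{2}\, \Phi(\g/T)},
\]
so that $\max_{T \le \g \le 2T} |\zeta'(\rho)| \ge |\mathcal{R}(T)|$. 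The theorem reduces to a lower bound for $|\mathcal{R}(T)|$ of the claimed size.

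Expanding $R(\g)^{2} = \sum_{m,n \le N} r(m)\, r(n)\, (mn)^{-i\g}$, both sums become double sums with inner term
\[
\sum_{\g} F(\rho)\, (mn)^{-i\g}\, \Phi(\g/T),
\]
with $F = 1$ for the denominator and $F = \zeta'$ for the numerator. Each such inner sum is evaluated by Proposition \ref{prop1}(ii)--(iii): Landau-style explicit formulas express these twisted zero sums in terms of arithmetic data at $mn$, with main terms proportional to $T \log(T/(2\pi))$ for the denominator and to $T \bigl(\log(T/(2\pi))\bigr)^{2}$ for the numerator, the error being controlled by contour-shifting the associated Dirichlet $L$-series. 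This is the only place where the large zero-free region conjecture enters, and it is also the main obstacle: the conjecture is what keeps these evaluations uniform for $mn \le T^{2\t}$, whereas unconditionally one is restricted to $mn \le T^{o(1)}$, which is precisely what forces the much weaker Theorem \ref{large1}. Absorbing a Dirichlet $L$-function zero in this region would shift a pole into the contour and destroy the main-term asymptotics at exactly the scales needed.

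Granting the uniform evaluation, $\mathcal{R}(T)$ reduces to $\log(T/(2\pi))$ times a ratio of multiplicative quadratic forms
\[
\log(T/(2\pi)) \cdot \frac{\sum_{m,n \le N} r(m)\, r(n)\, a(m,n)}{\sum_{m,n \le N} r(m)\, r(n)\, b(m,n)},
\]
with explicit multiplicative weights $a, b$. I then take $r$ supported on squarefree integers whose prime factors lie in $\bigl[L^{2}, \exp\bigl(\sqrt{\log T \log\log T}\bigr)\bigr]$ and set $r(p) = L/(\sqrt{p}\,\log p)$ with $L \asymp \sqrt{\log T/\log\log T}$; this is the standard Soundararajan choice tuned to the present weights. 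The ratio factors as an Euler product over primes in the support, and, after balancing against the constraint $N = T^{\t}$, it optimizes to $\exp\bigl((1/\sqrt{2}-\epsilon)\sqrt{\log T/\log\log T}\bigr)$. The constant $1/\sqrt{2}$ (rather than $1$) reflects the bilinear structure of the sum through $m$ and $n$ together with the extra $\log T$ gain from the $\zeta'$-twist in the numerator; thereafter the extremal analysis proceeds exactly along the lines of \cite{S}.
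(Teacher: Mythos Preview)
There is a genuine gap at the very first step. You form the ratio
\[
\mathcal{R}(T)=\frac{\sum_{\gamma} R(\gamma)^{2}\,\zeta'(\rho)\,\Phi(\gamma/T)}{\sum_{\gamma} R(\gamma)^{2}\,\Phi(\gamma/T)}
\]
and assert $\max_{T\le\gamma\le 2T}|\zeta'(\rho)|\ge|\mathcal{R}(T)|$. That inequality requires the weights in the denominator to be nonnegative; but $R(\gamma)=\sum_{n\le N}r(n)n^{-i\gamma}$ is genuinely complex, and your own expansion $R(\gamma)^2=\sum_{m,n}r(m)r(n)(mn)^{-i\gamma}$ confirms you mean the square, not the modulus squared. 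With complex weights one only has $|\text{numerator}|\le \max|\zeta'(\rho)|\cdot\sum_\gamma|R(\gamma)|^2\Phi(\gamma/T)$, so the denominator you need is $\sum_\gamma|R(\gamma)|^2\Phi$, not $\sum_\gamma R(\gamma)^2\Phi$. Once you replace $R(\gamma)^2$ by $|R(\gamma)|^2=\sum_{m,n}r(m)r(n)(n/m)^{i\gamma}$, the inner sums involve \emph{ratios} $n/m$, not products $mn$; with $x_n=\sqrt{n}\,r(n)$ this is exactly $A(\rho)A(1-\rho)$ and you are back to the paper's $S_1,S_2$ and Proposition~\ref{prop1} as stated. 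Your ``Landau-style'' evaluation of $\sum_\gamma \zeta'(\rho)(mn)^{-i\gamma}$ is therefore not what is needed, and Proposition~\ref{prop1} does not address it.

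Two further points. First, the constant $1/\sqrt{2}$ does not come from any ``bilinear structure'' or from the extra $\log T$ in the numerator; it comes purely from the restriction $M=T^{\theta}$ on the length of the Dirichlet polynomial in Proposition~\ref{prop1}, via $\sqrt{\log M/\log\log M}\sim\sqrt{\theta}\sqrt{\log T/\log\log T}$. Second, your description of the role of the large zero-free region conjecture is off: under RH alone, Proposition~\ref{prop1}(ii) already allows $M$ up to $T^{1/2-\epsilon}$, but only for coefficients of size $\tau_r(n)(\log T)^C$. The conjecture is needed because Soundararajan's resonator $f(n)$ is much larger on its support than any fixed $\tau_r$, and it is this growth of the coefficients (not the length by itself) that forces Dirichlet $L$-functions into the contour argument behind Proposition~\ref{prop1}(iii).
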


We also prove a result for small values of $|\zeta'(\rho)|$. Surprisingly,
this proof is significantly easier than the proof
of Theorem \ref{large2}. 
\begin{theorem} \label{small}  Assume the Riemann hypothesis.  
We have  
\[
    |\zeta'(\rho)| \ll 
    \exp \left(-c_3 \sqrt{ \frac{\log |\gamma|}{\log \log |\gamma|}
     } \right) 
\]    
for infinitely many $\gamma$ where $c_3 = \sqrt{\frac{2}{3}}-\epsilon$ is valid.  
\end{theorem}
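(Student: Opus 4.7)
The plan is to apply Soundararajan's resonance method in its upper-bound form. Let
$R(t) = \sum_{n \le N} r(n) n^{-it}$ be a Dirichlet polynomial resonator with nonnegative
multiplicative coefficients and $w(t)$ a smooth nonnegative weight concentrated on $[T, 2T]$.
The pointwise inequality
\[
\min_{T < \gamma < 2T} |\zeta'(\rho)|^{2} \;\le\;
\frac{\sum_{\gamma} |\zeta'(\rho)|^{2} |R(\gamma)|^{2} w(\gamma)}
{\sum_{\gamma} |R(\gamma)|^{2} w(\gamma)}
\]
reduces the theorem to choosing $R$ and $w$ so that the right-hand side is at most
$\exp(-2 c_{3} \sqrt{\log T/\log\log T})$.

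First I would evaluate the denominator: expanding
$|R(\gamma)|^{2} = \sum_{m,n} r(m)\overline{r(n)} (n/m)^{i\gamma}$ and applying the
Riemann--von Mangoldt explicit formula under RH to $\sum_\gamma (n/m)^{i\gamma} w(\gamma)$ gives
the standard lower bound
\[
 \sum_{\gamma} |R(\gamma)|^{2} w(\gamma) \;\gg\; T\log T \sum_{n} |r(n)|^{2}
\]
as long as $N \le T^{\theta}$ for some fixed $\theta < 1$. The crucial input for the numerator
is a uniform asymptotic for the twisted second moment
$\sum_{\gamma} |\zeta'(\rho)|^{2} (h/k)^{i\gamma} w(\gamma)$ for $h,k \le N$, a twisted analogue
of Conrey--Ghosh--Gonek's evaluation of $\sum_\gamma |\zeta'(\rho)|^{2}$ that should appear as
Proposition \ref{prop1}(i). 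Writing $|\zeta'(\rho)|^{2} = \zeta'(\rho)\zeta'(1-\rho)$, which
holds on RH since $\bar\rho = 1 - \rho$, the evaluation can be carried out under RH alone by
standard contour-shift arguments; this is why the argument avoids the large zero-free region
conjecture driving Theorem \ref{large2}, and it explains the relative simplicity the author
alludes to.

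Summing the twisted evaluation against $r(m)\overline{r(n)}$ and factoring the main term yields
a bilinear form of size $T(\log T)^{4} \sum_n |r(n)|^{2}$ with multiplicative structure, so that
the ratio in the displayed inequality takes the shape
$(\log T)^{3} \prod_{p \le N} H_{p}(r(p))$ for explicit local factors $H_{p}$. Taking $r$
multiplicative, supported on squarefree integers whose prime divisors lie in a window
$[L, L^{2}]$ with $L = (\log T)^{\alpha}$, and $r(p) = \kappa/\sqrt{p}$, the Euler product
contributes a factor $\exp\bigl(-C(\alpha,\kappa)\sqrt{\log T/\log\log T}\bigr)$. Optimizing the
free parameters $\alpha$ and $\kappa$ against the length constraint $N \le T^{\theta}$ produces
the admissible constant $c_{3} = \sqrt{2/3} - \epsilon$.

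The main technical obstacle is proving the twisted second moment with a sufficiently uniform
error term in $h, k$; the ensuing Euler-product optimization is routine in the resonance-method
literature. The comparatively modest constant $\sqrt{2/3}$ (versus the $1/\sqrt{2}$ of
Theorem \ref{large2}) reflects the $(\log T)^{3}$ prefactor stemming from the typical squared
size of $\zeta'(\rho)$ at a zero: the resonator must first absorb this polynomial factor
before exponential savings can be realized.
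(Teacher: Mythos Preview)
Your approach is not the one the paper takes, and the mechanism you sketch has a real gap.

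The paper does \emph{not} weight by $|\zeta'(\rho)|^{2}$; it weights by $\zeta'(\rho)^{-1}$. Concretely, it studies
\[
   S_3=\sum_{T_1<\gamma<T_2}\zeta'(\rho)^{-1}|A(\rho)|^{2},
   \qquad
   S_2=\sum_{T_1<\gamma<T_2}|A(\rho)|^{2},
\]
and uses $\max_{\gamma}|\zeta'(\rho)|^{-1}\ge |S_3|/S_2$. The reason this is ``significantly easier'' is that $S_3$ is precisely the sum of residues of $\zeta(s)^{-1}A(s)A(1-s)$ over the critical zeros, so a single contour shift together with $\zeta(s)^{-1}=\sum\mu(n)n^{-s}$ evaluates it; this is Proposition~\ref{prop2}, not a case of Proposition~\ref{prop1}. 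Choosing $x_m=\sqrt{m}\,\mu(m)f(m)$ makes the $\mu$ from $\zeta^{-1}$ cancel, and one lands directly on Soundararajan's ratio $\bigl(\sum_{hn\le M}f(h)f(hn)/\sqrt{n}\bigr)/\sum_{m\le M}f(m)^{2}$. The constant $c_3=\sqrt{2/3}-\epsilon$ comes from the length restriction $M<T^{2/3-\epsilon}$ forced by the error term $T^{\epsilon}M\|x_n\|_{\infty}\ll T^{\epsilon}M^{3/2}$ in Proposition~\ref{prop2}; it has nothing to do with a $(\log T)^{3}$ prefactor, which would in any case be absorbed into the $o(1)$ inside the exponential.

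As for your route: the twisted second moment $\sum_{\gamma}|\zeta'(\rho)|^{2}(h/k)^{i\gamma}$ is not Proposition~\ref{prop1}(i) (that formula concerns $S_2$, with no factor of $\zeta'$ at all) and is not proved anywhere in the paper; obtaining it uniformly in $h,k$ is substantially harder than the $\zeta^{-1}$ contour argument above. More seriously, with nonnegative resonator coefficients the diagonal $m=n$ already contributes $\asymp T(\log T)^{4}\sum_n r(n)^2$ to your numerator, so the ratio starts at size $(\log T)^{3}$; to push it below $1$, let alone exponentially small, you would need large signed cancellation from the off-diagonal, and you give no indication of how a resonator with $r(p)=\kappa/\sqrt{p}>0$ accomplishes that. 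The paper sidesteps this entirely by working with $\zeta'(\rho)^{-1}$ rather than $|\zeta'(\rho)|^{2}$.
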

\section{Notation}
We shall use Vinogradov's notation $f(x) \ll g(x)$ to mean there exists a $C>0$
such that
$
    |f(x)| \le C g(x)
$
for all $x$ sufficiently large.  We denote $f(x)=O(g(x))$ to mean the same thing. 
Also, $f(x)=o(g(x))$ means $f(x)/g(x) \to 0$ as $x \to \infty$. We shall consider arbitrary sequences $x_n$  supported on an interval $[1,M]$ and we employ 
the notation 
\[
    ||x_n ||_{\infty} = \max_{n \le M} |x_n| \ \mathrm{and} \
    ||x_n||_{p} = ( \sum_{n \le M} |x_n|^{p} )^{1/p} \ . 
\]
 
We now define some basic arithmetic functions.  We define $\mu(n)$, 
the Mobius function, to be the coefficient of $n^{-s}$ in the Dirichlet 
series $\zeta(s)^{-1} = \sum_{n=1}^{\infty} \mu(n) n^{-s}$. 
We define $\Lambda_{k}(n)$ to be the coefficient of $n^{-s}$ in the Dirichlet series of $(-1)^{k}\zeta^{(k)}(s)/\zeta(s)$.  Another way to express this is $\Lambda_k(n) = (\mu*\log^k)(n)$.
Note that $\Lambda_{k}(n)$ is supported on those integers with at most 
$k$ distinct prime factors.   We define $\tau_k(n)$, the $k$-the divisor 
function, to be the coefficient of $n^{-s}$ in the Dirichlet series 
$\zeta(s)^k = \sum_{n=1}^{\infty} \tau_k(n) n^{-s}$.

\section{Explanation of the resonance method}
In this section we outline the resonance method.  Soundararajan
\cite{S} recently invented this simple method to find large 
values of $|\zeta(1/2+it)|$
(and also other $L$-functions and character sums).  
Under the Riemann hypothesis it is known that 
\[
    |\zeta(1/2+it)| \ll \exp \left(\frac{c_1' \log |t|}{\log \log |t|}
    \right)  
\]
where $c_1' > 0$ is explicitly given.
However, it has been proven by Montgomery \cite{Mo2}, assuming the Riemann 
hypothesis, that there exist 
arbitrarily large $t$ such that
\[
    |\zeta(1/2+it)| \gg  
    \exp
    \left(  c_2'
    \sqrt{ \frac{\log |t|}{\log \log |t|} }
    \right)
\]   
for some positive constant $c_2'$.  
Later, Balasubramanian and  Ramachandra \cite{BR} gave an unconditional
proof of this result with an explicit value $c_{2}' <1$.  
The new method permits the choice 
$c_2' = 1-\epsilon$.  We now sketch the method. Consider the mean values
\[
    \int_{T}^{2T} \zeta(1/2+it) |A(it)|^2 \, dt \ \mathrm{and}
    \ \int_{T}^{2T} |A(it)|^2 \, dt 
\]
where $A(s) = \sum_{n \le M} x_n n^{-s}$ is a Dirichlet polynomial
with arbitrary positive coefficients $x_n$ and $y \le T^{1-\epsilon}$.  A standard
calculation shows that 
\[
   \frac{   \int_{T}^{2T} \zeta(1/2+it) |A(it)|^2 \, dt  }{\int_{T}^{2T} |A(it)|^2 \, dt
  }  =  \left(
  \frac{ \sum_{nu \le M} \frac{x_n x_{nu}}{\sqrt{u}}}{\sum_{n \le M} x_{n}^2}
  \right)
  (1+o(1))  \ . 
\] 
By taking absolute values we deduce that
\begin{equation}
    \max_{T \le t \le 2T} |\zeta(1/2+it)| 
    \ge  \left(
    \frac{ \sum_{nu \le M} \frac{x_n x_{nu}}{\sqrt{u}}}{\sum_{n \le M} x_{n}^2}
    \right)
  (1+o(1)) \ . 
   \label{eq:quadf}
\end{equation}
The problem is thus reduced to optimizing the fraction on the right. 
Soundararajan \cite{S} shows that the maximum of the above quotient is 
\[
   \exp \left( \mbox{$
  \sqrt{ \frac{\log M}{\log \log M}} $} (1+o(1))
   \right)
\]
and this is obtained by choosing $x_n=f(n)$ where 
$f(n)$ is multiplicative and supported
on squarefree numbers.  We define $f$ on the primes as follows:
let $L = \sqrt{\log M \log \log M}$
and set
\[
   f(p) =  \left\{ \begin{array}{ll}
                  \frac{L}{\sqrt{p} \log p} & 
                  \mbox{if $L^2 \le p \le \exp((\log L)^2)$} \\
                  0 & \mbox{else} \\
                  \end{array} \right. \ . 
\]

The strategy of this article is to follow the above argument. We require
asymptotic formulae for the mean values 
\[
  S_1 = \sum_{0 < \gamma < T} \zeta'(\rho)A(\rho)A(1-\rho)
  \  \mathrm{and} \ 
  S_2 = \sum_{0 < \gamma < T} A(\rho)A(1-\rho)  
\]
where $A(s) = \sum_{n \le M} x_n n^{-s}$ has arbitrary real coefficients $x_n$, $y=T^{\theta}$, and $\theta < 1/2$. 
Observe that if the Riemann Hypothesis is true then
$|A(\rho)|^2= A(\rho)A(1-\rho)$ and thus
\begin{equation}
    S_1 = \sum_{0 < \gamma < T} \zeta'(\rho) |A(\rho)|^{2}
    \ \mathrm{and} \
    S_2 = \sum_{0 < \gamma < T} |A(\rho)|^{2}  \ . 
\end{equation}
In fact, we shall show that $S_1/S_2$ is essentially the same
quotient of quadratic forms as in~(\ref{eq:quadf}).  

We have the following formulae for $S_1$ and $S_2$:
\begin{proposition} \label{prop1}
$(i)$ Suppose that $|x_n| \ll T^{\epsilon}$ and $\theta < 1$.  Then we have 
\begin{equation}
    S_2 = 
    N(T) \sum_{m \le M} \frac{x_{m}^2}{m}
    - \frac{T}{\pi} \sum_{m \le M} \frac{(\Lambda*x)(m)x_{m}} {m}
    + o(T)  
    \label{eq:S2eq}
\end{equation}
where $N(T)$ is the number of zeros of the zeta function in the box
$0 \le \mathrm{Re}(s) \le 1$, $0 \le \mathrm{Im}(s) \le T$.  \\
\noindent $(ii)$ Suppose that $|x_n| \ll \tau_r(n) (\log T)^{C}$ 
for some $C>0$ and 
$\theta < 1/2$.  Then we have 
\begin{equation}
   S_1 = \frac{T}{2 \pi}
  \left( \sum_{nu \le M} \frac{x_{u}x_{nu}r_0(n)}{nu}
   +
   \sum_{{\begin{substack}{a,b \le M
         \\ (a,b)=1}\end{substack}}}
   \frac{r_1(a,b)}{ab}
  \sum_{g \le \min(\frac{M}{a},\frac{M}{v})} \frac{x_{ag}x_{bg}}{g}
  \right)
   + o(T) 
   \label{eq:S1eq}
\end{equation}
where
\begin{eqnarray*}
   r_0(n) & =
   \mbox{$\frac{1}{2}$} P_2( \log(\mbox{$\frac{T}{2 \pi}$}))
   - P_1( \log(\mbox{$\frac{T}{2 \pi}$})) \log n -
   \mbox{$\frac{1}{2}$} (\log n)^2
   +(\Lambda*\log)(n) \ ,  \\
   r_1(a,b) & =   \mbox{$\frac{1}{2}$} \Lambda_2(a)
   - R_1 \left( \mbox{$\log \left( \frac{T}{b} \right)$} \right) \Lambda(a) 
   -\tilde{R}_1 \left( \mbox{$ \log \left( \frac{T}{b} \right) $} \right) \alpha_1(a)
   -\alpha_2(a) \ , 
   \label{eq:r0r1}
\end{eqnarray*}
$P_2,P_1, R_1, \tilde{R}_1$ are monic polynomials of degrees 2,1,1,1 respectively.
$\alpha_2,\alpha_1$ are arithmetic functions.
$\alpha_2$ is supported on $a$ with $\omega(a) \le 2$ and $\alpha_1$ 
is supported on prime powers.  Moreover, $\alpha_1(p^j) \ll \frac{\log p}{p}$,
$\alpha_2(p^j) \ll \frac{j (\log p)^2}{p}$, and $\alpha_2(p^j q^k) \ll (\log p)(\log q)
(p^{-1} + q^{-1})$. \\
(iii) Assume the large zero-free region conjecture. 
The formula for $S_1$ in (ii) remains valid under the assumption that
$x_n =\sqrt{n}f(n) $ and $\theta < 1/3$.
\end{proposition}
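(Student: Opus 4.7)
The overall strategy is to convert each sum over zeros into a contour integral whose residues at the nontrivial zeros of $\zeta$ recover the summand. Under the Riemann hypothesis the relevant $\rho$'s are generically simple, so $\Res_{s=\rho}\zeta'(s)/\zeta(s) = 1$ and $\Res_{s=\rho}\zeta'(s)^{2}/\zeta(s) = \zeta'(\rho)$, which leads to the representations
\begin{equation*}
S_2 = \frac{1}{2\pi i}\oint_R \frac{\zeta'(s)}{\zeta(s)} A(s) A(1-s)\, ds,
\qquad
S_1 = \frac{1}{2\pi i}\oint_R \frac{\zeta'(s)^{2}}{\zeta(s)} A(s) A(1-s)\, ds,
\end{equation*}
where $R$ is the rectangle with vertices $c$, $c+iT$, $1-c+iT$, $1-c$ with $c = 1 + 1/\log T$. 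Standard bounds on $\zeta'/\zeta$ away from zeros control the horizontal segments, so only the two vertical sides matter. The pole at $s=1$ is simple in the $S_2$ integrand and of order three in the $S_1$ integrand; the Laurent expansion of $\zeta'(s)^{2}/\zeta(s)$ about $s=1$ supplies the monic polynomials $P_2, P_1, R_1, \tilde R_1$ appearing in $r_0$ and $r_1$.

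For part (i), expand $A(s)A(1-s) = \sum_{m,n \le M}\frac{x_m x_n}{n}(n/m)^{s}$ and interchange sum and integral. The inner problem becomes the evaluation of $\sum_{0<\gamma<T} X^{\rho}$ for $X = n/m$, which is precisely the Landau-Gonek formula: the sum is $-\tfrac{T}{2\pi}\Lambda(X) + O(X\log^{2} T)$ when $X$ is an integer and negligible otherwise. The diagonal $m = n$ recovers the term $N(T)\sum_m x_m^{2}/m$, while the off-diagonal contribution collapses to the convolution $-\tfrac{T}{\pi}\sum_{m}(\Lambda*x)(m)x_m/m$. The condition $\theta<1$ is exactly what is needed so that the accumulated Landau-Gonek errors, of order $M^{2}(\log T)^{2} = T^{2\theta}(\log T)^{2}$, remain $o(T)$.

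For parts (ii) and (iii) the right edge $\sigma = c$ is treated similarly after expanding $\zeta'(s)^{2}/\zeta(s) = \sum_n (\Lambda*\log)(n) n^{-s}$ and carrying out the term-by-term integration against $A(s)A(1-s)$; this yields the diagonal piece $\sum_{nu\le M}\frac{x_u x_{nu}}{nu}r_0(n)$ once the polar contribution at $s=1$ responsible for $r_0$ has been absorbed. The off-diagonal weight $r_1(a,b)$ arises from the left edge $\sigma = 1-c$: applying the functional equation $\zeta(s) = \chi(s)\zeta(1-s)$ converts that edge into a dual integral whose evaluation, using the known asymptotic expansion of $\chi(s)$, produces a coprime sum indexed by $m = ag$, $n = bg$ with $(a,b)=1$. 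Careful bookkeeping of the subleading poles in the Laurent expansions pins down the arithmetic functions $\alpha_1, \alpha_2$ together with their stated pointwise bounds.

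The principal obstacle is controlling error terms in the off-diagonal analysis. In part (ii) the accumulated off-diagonal error is of size $M^{2} T^{\epsilon}$, which forces $\theta < 1/2$. To reach $\theta < 1/3$ in part (iii) I would exploit the multiplicative structure of $x_n = \sqrt{n}\, f(n)$: the coprime off-diagonal sum then factors as an Euler product over primes, and each local sum can be estimated against a partial product resembling $L(1,\chi)^{\pm 1}$ for suitable characters $\chi$. The Large Zero-Free Region conjecture supplies precisely the uniformity in the $L$-function zero-free region required to make these local estimates hold with negligible error; in particular it rules out the Siegel-zero scenario that would otherwise obstruct any extension of the admissible range beyond $\theta = 1/2$.
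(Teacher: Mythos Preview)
The paper does not actually prove Proposition~\ref{prop1} in the text; it simply cites Theorem~1.3 of \cite{Ng2} for parts $(ii)$ and $(iii)$ and \cite{CGGGH}, \cite{Ng0} for part $(i)$. So there is no detailed argument to compare against, and your contour-integral framework (residue theorem, right edge via Dirichlet expansion, left edge via functional equation) is indeed the standard one those references use.

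That said, your treatment of part $(i)$ contains a genuine inconsistency. You write that the accumulated Landau--Gonek error is of order $M^{2}(\log T)^{2}=T^{2\theta}(\log T)^{2}$ and that ``the condition $\theta<1$ is exactly what is needed'' for this to be $o(T)$. But $T^{2\theta}(\log T)^{2}=o(T)$ forces $\theta<1/2$, not $\theta<1$. The crude term-by-term use of Landau--Gonek, summing the individual errors $O(X\log^{2}T)$ over all pairs $m,n\le M$, cannot reach the full range $\theta<1$ stated in the Proposition. To obtain $\theta<1$ one must treat the left edge $\sigma=1-c$ more carefully: apply the functional equation to $\zeta'/\zeta$ there, and then bound the resulting integral of $|A(1-c+it)|^{2}$ (or the relevant bilinear form) using the mean-value theorem for Dirichlet polynomials. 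This is exactly the device you invoke later in Proposition~\ref{prop2}, and it is the source of the extra savings in \cite{CGGGH} and \cite{Ng0}. Without it your argument for $(i)$ only yields $\theta<1/2$.

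Your sketch for $(iii)$ is also too impressionistic. The large zero-free region conjecture does not enter by making ``local sums'' look like $L(1,\chi)^{\pm1}$; rather, in \cite{Ng2} the off-diagonal analysis of $S_{1}$ leads (after the functional-equation step) to sums that are handled via the Dirichlet hyperbola/divisor method, and the error terms involve partial sums of $\Lambda(n)\chi(n)$ for characters $\chi$ to moduli up to $M$. A zero-free region of the stated quality is what is needed to bound these partial sums with enough uniformity in $q$ to push the admissible range from $\theta<1/2$ (which needs only $|x_n|\ll\tau_r(n)(\log T)^{C}$) down to $\theta<1/3$ for the much larger coefficients $x_n=\sqrt{n}f(n)$. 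Your broad instinct that Siegel zeros are the obstruction is correct, but the mechanism you describe is not the one actually used.
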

\begin{proof}  The proofs of $(ii)$ and $(iii)$ may be found in
Theorem 1.3 of \cite{Ng2}. 
The formula for $S_2$ in $(i)$ 
is mentioned without proof on page 6 of \cite{CGGGH}.
It can be proven by following the argument of \cite{Ng0} Lemma 3. 
\end{proof}
  
From Proposition \ref{prop1}, we can explain our strategy for
proving Theorem \ref{large2}.  
We shall show that in the formulae  (\ref{eq:S1eq}) and (\ref{eq:S2eq})
for $S_1$ and $S_2$ the significant 
terms are
\[
       \frac{T  \log^2(\mbox{$\frac{T}{2 \pi}$})}{4 \pi} 
       \sum_{nu \le M} \frac{x_u x_{nu}}{nu} 
       \ \mathrm{and} \ 
     \frac{T   \log(\mbox{$\frac{T}{2 \pi}$})}{2 \pi}
      \sum_{m \le M} \frac{x_{m}^2}{m}
 \]
respectively.  By choosing $x_n = \sqrt{n}f(n)$ we see that
\[
   \max_{T \le \gamma \le 2T} |\zeta'(\rho)|
   \ge \frac{S_1}{S_2} 
   \approx  
 \frac{\log(\frac{T}{2\pi})}{2}  \left(
  \frac{ \sum_{rn \le M} \frac{f(n)f(nr)}{\sqrt{r}}}{\sum_{n \le M} f(n)^2}
  \right)
  = \exp \left( \mbox{$\sqrt{\frac{\log M}{\log \log M}}$}  (1+o(1))
  \right) \ . 
\]
This is the essential content of Theorem \ref{large2}.  
In order to make this argument rigorous, we will show that each 
of the other terms in the formulae for $S_1$ and $S_2$ are smaller than the principal terms. 
The argument for Theorem \ref{small} is very similar.  In this case we consider 
 \[
     S_3 = \sum_{T < \gamma < 2T} \zeta'(\rho)^{-1} |A(\rho)|^2 \ 
     \mathrm{and} \
     S_2 = \sum_{T < \gamma < 2T} |A(\rho)|^2 \ . 
 \]
As before we will show that the ratio $S_3/S_2$ gives rise to the same
quadratic form as in (\ref{eq:quadf}).  

\section{Large values of $\zeta'(\rho)$: Proof of Theorem \ref{large1}}

In this section we prove Theorem \ref{large1}.  As explained previously
our strategy is to evaluate asymptotically $S_1/S_2$ for
a certain choice of coefficients.  As we are only assuming the 
Riemann hypothesis, we are restricted to choosing
$x_n = \tau_{r}(n)$ with $r \in \mathbb{N}$.
In the course of this calculation, we shall encounter several other
multiplicative functions.  We define 
\[
     f_1(n) =  \prod_{p^e \mid \mid n} \frac{\sum_{j=0}^{\infty} 
     \frac{\tau_{r}(p^{e+j})}{p^{j}}}
     {
     \sum_{j=0}^{\infty} 
     \frac{\tau_{r}(p^{j})}{p^{j}} 
     }
    \ \mathrm{and} \
   f_2 (n) =  \prod_{p^e \mid \mid n} \frac{\sum_{j=0}^{\infty} 
     \frac{\tau_{r}(p^{e+j}) \tau_{r}(p^{j})}{p^{j}}}
     {
     \sum_{j=0}^{\infty} 
     \frac{\tau_{r}(p^{j})^2}{p^{j}} } \ . 
\]
Note that for $i=1,2$ 
$
   f_i(p)= r(1+O(p^{-1})) 
$.
The asymptotic evaluation of $S_1$  will require the evaluation
of several sums of standard arithmetic functions.
 We shall employ the following:
\begin{lemma} \label{mult} Let $a,b,k,r, u \in \mathbb{N}$. \\
$(i)$ 
\[
    \sum_{n \le x} \tau_{r}(nu) = \frac{f_1(u) x (\log x)^{r-1}}{(r-1)!}     (1 + O( (\log x)^{-1})) \ .  
\]
$(ii)$
\[
    \sum_{n \le x} \tau_{r}(n)f_1(n)
    =  \frac{C_0 x(\log x)^{r^2-1}}{(r^2-1)!} (1+ O((\log x)^{-1})) 
\]
where 
\begin{equation}
     C_0 = \prod_{p} \left( 1- \frac{1}{p}  \right)^{r}
     \sum_{j=0}^{\infty} \frac{\tau_{r}(p^j) f_1(p^j)}{p^j}
     = \prod_{p} \left( 1- \frac{1}{p}  \right)^{r^2+r}
     \sum_{j=0}^{\infty} \frac{\tau_{r}(p^j) \tau_{r+1}(p^j)}{p^j}
     \ . 
     \label{eq:C0}
\end{equation}
$(iii)$
\[
    \sum_{n \le x} f_2 (n) = \frac{C_1 x (\log x)^{r-1}}{(r-1)!} 
    (1 + O((\log x)^{-1})   
\]
where 
\begin{equation}
    C_1 = \prod_{p}  \left( 1- 1/p\right)^{r} 
    \sum_{j=0}^{\infty} \frac{f_2 (p^j)}{p^j} 
    = \prod_{p} \left(1- 1/p \right)^r 
    \frac{\sum_{j=0}^{\infty} 
    \frac{\tau_{r}(p^j) \tau_{r+1}(p^j)}{p^j} }{
    \sum_{j=0}^{\infty} \frac{\tau_r (p^j)^2}{p^j}
    } \ . 
    \label{eq:C1}
\end{equation}
$(iv)$ 
\[
    \sum_{n \le x} \frac{\tau_{r}(an) \tau_{r}(bn)}{n}
    = C_2 f_2(a) f_2(b)  \frac{(\log x)^{r^2}}{(r^2)!}
    (1 + O((\log x)^{-1})) 
\]
where 
\[
   C_2 = \prod_{p} \left(
   1-1/p
   \right)^{r^2} \sum_{j=0}^{\infty} \frac{\tau_{r}(p^k)^2}{p^k} \ . 
\]
Notice that it follows immediately from
(\ref{eq:C0}) and (\ref{eq:C1}) that $C_0 = C_1 C_2$.   \\
$(v)$
\[
    \sum_{n \le x} \Lambda_{k}(n)
    = k x (\log x)^{k-1} (1 + O((\log x)^{-1})) \ . 
\]
$(vi)$ For $i=1,2$
\[
    \sum_{n \le x} \Lambda(n) f_i (n) 
    = rx (1 + O((\log x)^{-1})) \ . 
\]
$(vii)$ For $i=1,2$
\[
    \sum_{n \le x} \Lambda_2(n) f_i(n)
    = (r^2 + r) x (\log x)(1 + O((\log x)^{-1}))  \ . 
\]
\end{lemma}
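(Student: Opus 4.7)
The plan is to treat all seven estimates uniformly as applications of either the Selberg-Delange method or Perron's formula followed by a contour shift past $s=1$. For each sum $\sum_{n \le x} a(n)$ in the statement, I would form the Dirichlet series $F(s) = \sum_{n} a(n) n^{-s}$, write $F(s) = \zeta(s)^k G(s)$ with $G$ holomorphic and nonzero in a neighborhood of $s=1$ for the appropriate $k$, and read off the main term from the pole.

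Parts (i)--(iv) all concern multiplicative summands and fit this template. For (i), multiplicativity of $\tau_r$ yields $\sum_n \tau_r(nu) n^{-s} = \prod_p \sum_{j \ge 0} \tau_r(p^{j+v_p(u)}) p^{-js}$; comparing local factors to those of $\zeta(s)^r$ gives $F(s) = \zeta(s)^r G_u(s)$ with $G_u(1) = f_1(u)$ by the very definition of $f_1$. For (ii) the local factor $\sum_j \tau_r(p^j) f_1(p^j) p^{-js}$ has leading expansion $1 + r^2 p^{-s} + O(p^{-2s})$, so $F$ has a pole of order $r^2$ at $s=1$ and the residual Euler product is $C_0$. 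The two equivalent shapes of $C_0$ (and of $C_1$ in (iii)) follow from the local identity
\[
    \sum_{j \ge 0} \frac{\tau_r(p^j) f_1(p^j)}{p^j} = \Bigl(1 - \tfrac{1}{p}\Bigr)^{r} \sum_{k \ge 0} \frac{\tau_r(p^k) \tau_{r+1}(p^k)}{p^k},
\]
which itself comes from the finite identity $\sum_{j=0}^{k} \tau_r(p^j) = \tau_{r+1}(p^k)$ after swapping the order of summation. The equality $C_0 = C_1 C_2$ is then immediate prime by prime. Part (iv) applies the same template to $\sum_n \tau_r(an)\tau_r(bn) n^{-s-1}$, which factors as an Euler product because $(a,b)=1$; the $a, b$ dependence separates off as $f_2(a) f_2(b)$ just as $f_1(u)$ did in (i), and the partial summation passing from $\sum_{n \le x} h(n)$ with pole of order $r^2$ to $\sum_{n \le x} h(n)/n$ accounts for the $(\log x)^{r^2}/(r^2)!$ shape.

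Parts (v)--(vii) rest on $\sum_n \Lambda_k(n) n^{-s} = (-1)^k \zeta^{(k)}(s)/\zeta(s)$, which has a pole of order $k$ at $s=1$ with leading Laurent coefficient $k!$; Perron plus a standard contour shift gives (v). For (vi) I would write the sum as $\sum_{p^j \le x} (\log p) f_i(p^j)$, discard the $j \ge 2$ terms as $O(\sqrt{x} \log x)$, and use $f_i(p) = r + O(1/p)$ together with the prime number theorem $\theta(x) = x + O(x/\log x)$. For (vii) the explicit values $\Lambda_2(p^k) = (2k-1) \log^2 p$ and $\Lambda_2(p^j q^\ell) = 2 \log p \log q$ (for distinct primes $p,q$) split the sum into a prime-power part and a two-distinct-primes part; $f_i$ is multiplicative, and partial summation from PNT evaluates each piece as $r x \log x$ and $r^2 x \log x$ respectively, summing to $(r^2 + r) x \log x$.

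The main obstacle is controlling error terms uniformly through the Euler products in (ii) and (iv), where the pole order $r^2$ grows quadratically in $r$ and the standard $(\log x)^{-1}$ saving from a Selberg-Delange contour shift is tight. A secondary purely algebraic obstacle is tracking the equivalence of the two expressions for $C_0$ and confirming $C_0 = C_1 C_2$, but as indicated above this reduces to the local generating-function identity $\sum_{j=0}^{k} \tau_r(p^j) = \tau_{r+1}(p^k)$ and analogous local rearrangements for $f_2$.
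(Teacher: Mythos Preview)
Your proposal is correct and matches the paper's approach essentially line for line. The paper also factors the relevant Dirichlet series as $\zeta(s)^{k}$ times a correction Euler product, applies Perron's formula and shifts the contour (it works out only part (iv) in detail, calling (i)--(iii) ``very similar'' and citing Selberg for the method), then handles (v) as well known and deduces (vi), (vii) from the support and local values of $\Lambda$, $\Lambda_2$ together with $f_i(p) = r(1+O(p^{-1}))$, exactly as you outline.
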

\begin{proof}  Since the proofs of $(i)-(iv)$ are very similar we shall
just prove part $(iv)$.   We give a sketch of the proof as the argument is
standard (see for example \cite{Se}).  We define the Dirichlet series $H(s) = \sum_{n=1}^{\infty} \tau_{r}(an)  \tau_r (bn) n^{-s}$
and since $\tau_r$ is multiplicative we have the factorization 
\[
   H(s) = \prod_{(p,ab)=1} \left( 
   \sum_{k=0}^{\infty} \frac{\tau_r (p^k)^2}{p^{ks}}
   \right)
   \prod_{p^e \mid \mid a}  
   \sum_{k=0}^{\infty} \frac{\tau_r(p^{e+k}) \tau_{r}(p^k)}{p^{ks}}
   \prod_{p^f \mid \mid b}   \sum_{k=0}^{\infty}
   \frac{\tau_r(p^{k}) \tau_{r}(p^{f+k})}{p^{ks}} \ . 
\]
Next we define for $s \in \mathbb{C}$ and $n \in \mathbb{N}$
\[
   F(s;n) = \prod_{p^{e} \mid \mid n}
   \left(
   \frac{ \sum_{k=0}^{\infty} \frac{\tau_r(p^{e+k}) \tau_{r}(p^k)}{p^{ks}}}
   {\sum_{k=0}^{\infty} \frac{\tau_r (p^k)^2}{p^{ks}}   } 
   \right)  \ , \
   G(s) = \prod_{p} (1-1/p^s)^{r^2} 
    \sum_{k=0}^{\infty} \frac{\tau_r (p^k)^2}{p^{ks}} 
\]   
and thus $H(s) = \zeta(s)^{r^2} F(s,ab) G(s)$. Moreover, we notice that $F(1;n) =f_2 (n)$ and $G(1)=C_2$.
By Perron's formula,
\[
    \sum_{n \le x} \frac{\tau_{r}(an) \tau_{r}(bn)}{n}
    = \frac{1}{2 \pi i} \int_{\kappa-iU}^{\kappa+iU}H(s+1) \, 
    \frac{x^s ds}{s} + O \left( 
    \frac{(\log x)^{r^2}}{U} + \frac{1}{x^{1-\epsilon}}
    \left( 1 + x \frac{\log U}{U} \right)
    \right)
\]
with $\kappa=(\log x)^{-1}$.  Let $\Gamma(U)$ denote the contour consisting 
of $s \in \mathbb{C}$ such that $\mathrm{Re}(s)=- \frac{c'}{\log 
(|\mathrm{Im}(s)|+2)}$ 
and $|\mathrm{Im}(s)| \le U$ for an appropriate $c'>0$. 
We deform the contour past $\mathrm{Re}(s)=0$ line to $\Gamma(U)$
picking up the residue at $s=0$.   The residue at $s=0$ equals
\[
    C_2 f_2(a) f_2(b)  \frac{(\log x)^{r^2}}{(r^2)!}
    (1 + O((\log x)^{-1})) 
\]
which corresponds to the main term.  Employing standard bounds for $\zeta(s)$
in the zero-free region we can show that contribution of the integral on $\Gamma(U)$ is smaller than the main term for an appropriate choice of $U$
by at least one factor of $\log x$.  
 Part $(v)$ is a well known fact. Part $(vi)$ follows from the fact that 
$\Lambda$ is supported on the prime powers and $\Lambda(p^j)=\log(p)$.
Part $(vii)$ follows from the fact that
$\Lambda_2$ is supported on those $n$ with $\omega(n) \le 2$ 
and moreover
$\Lambda_2(pq) = 2 \log p \log q$, $\Lambda_2(p) = (\log p)^2$,
and  $f_i(p)= r(1+O(p^{-1}))$ . 
\end{proof}
We are now prepared to prove Theorem 1.  In the course of the proof,
we will encounter the following integrals:
\begin{equation}
\begin{split}
   i(u,v) & := \int_{0}^{1} x^{u}(1-x)^{v} \, dx
   = \frac{u! v!}{(u+v+1)!}  \ , \\
   c_{X}(u,v) &  := \int_{1}^{X} \frac{(\log X/t)^{u} (\log t)^{v}}{t} \, dt 
   = (\log X)^{u+v+1} i(u,v)   
   \label{eq:cX}
\end{split}
\end{equation}
where $X \ge 1$.

\noindent {\it Proof of Theorem \ref{large1}}. 
By Proposition \ref{prop1} we may write $S_1 = \tilde{S}_1 + o(T)$ where
\[
    \tilde{S}_1 = 
   \frac{T}{2 \pi} \left( \sum_{nu \le M} \frac{x_u 
   x_{nu}r_0(n)}{nu}
   + \sum_{{\begin{substack}{a,b \le M
         \\ (a,b)=1}\end{substack}}}
   \frac{r_1(a,b)}{ab}
  \sum_{g \le \min(\frac{M}{a},\frac{M}{v})} \frac{x_{ag}x_{bg}}{g} \right)  
\]
and
\begin{equation}
\begin{split}
      r_0(n) & = \frac{1}{2} P_2(  \log(\mbox{$\frac{T}{2 \pi}$})) + \sum_{d \mid n} g(d) \ , \\
     g(d) & = - (P_1( \log(\mbox{$\frac{T}{2 \pi}$}))+\log d)\Lambda(d) + \frac{\Lambda_2(d)}{2} \ . 
     \label{eq:g}
\end{split}
\end{equation}
Thus we have 
$
   \tilde{S}_1 = \frac{T}{2 \pi} \left( \frac{P_2( \log(T/2 \pi))}{2} T_1 + T_2  + T_3 \right)
$
where
\begin{equation}
\begin{split}
   T_1 & = \sum_{nu \le M} \frac{x_u 
   x_{nu}}{nu} \ ,  \\
   T_2 & =  \sum_{dnu \le M} \frac{g(d)x_{u} x_{dnu}}{dnu} \ ,  \\
   T_3 & = \sum_{{\begin{substack}{a,b \le M
         \\ (a,b)=1}\end{substack}}}
   \frac{r_1(a,b)}{ab}
  \sum_{g \le \min(\frac{M}{a},\frac{M}{v})} \frac{x_{ag}x_{bg}}{g} 
  \ . 
\end{split}
\end{equation}

\subsection{Evaluation of $T_1$}
Now by Lemma \ref{mult} $(i)$ and $(ii)$ we have
\begin{equation}
\begin{split}
  T_1 & 
   = \sum_{u \le M} \frac{\tau_{r}(u)}{u} 
 \int_{1^{-}}^{M/u} t^{-1} d \left(
 \sum_{n \le t} \tau_{r}(nu)\right)
 \\
  &  
  \sim \sum_{u \le M} \frac{\tau_{r}(u) f_1(u)}{u} 
  \frac{ (\log M/u)^{r}}{r!}  
  =  \frac{1}{r!} \int_{1^{-}}^{M} \log(M/t)^r t^{-1} 
  d \left(
  \sum_{u \le t} \tau_{r}(u) f_{1}(u) 
  \right)  \\
  & \sim  \frac{1}{r!}  \int_{1}^{M} \frac{ (\log(M/t)^{r}}{t} 
    \frac{C_0 (\log t)^{r^2-1}}{(r^2-1)!} \, dt  \ . 
    \nonumber
\end{split}
\end{equation}    
By (\ref{eq:cX}) it follows that 
\[    
    T_1 \sim \frac{C_0}{r!(r^2-1)!} c_{M}(r,r^2-1)
    = \frac{C_0 (\log M)^{r^2+r}}{(r^2+r)!} \ . 
\]

\subsection{Evaluation of $T_2$}
Since the calculation of $T_2$ and $T_3$ are rather similar to that 
of $T_1$ we shall not record every step of their calculation. 
By Lemma \ref{mult} $(i)$ we have 
\[
   T_2 
  \sim  \sum_{d \le M} \frac{g(d)}{d} 
   \sum_{u \le M/d} \frac{\tau_r(u)}{u} 
   \frac{ f_1 (du) \log(M/du)^r}{r!}  \ .  
\]
As $g$ is supported on those integers $d$ with $\omega(d) \le 2$ we have 
\begin{equation}
\begin{split}
   T_2 & \sim \sum_{d \le M} \frac{g(d) f_1(d)}{d} 
   \sum_{u \le M/d} \frac{\tau_r(u)}{u} 
   \frac{ f_1 (u) \log(M/du)^r}{r!}   \\
   & = \frac{1}{r!}
   \sum_{d \le M}\frac{g(d)f_1(d)}{d}
   \int_{1}^{M/d} \frac{ \log(M/dt)^{r}}{t}
   \frac{C_0 (\log t)^{r^2-1}}{(r^2-1)!} \, dt  \\
   & = \frac{C_0}{(r^2+r)!} \sum_{d \le M}  \frac{g(d) f_1(d)}{d} 
   (\log M/d)^{r^2 +r} 
   \nonumber
\end{split}
\end{equation}
where we have invoked Lemma \ref{mult} $(ii)$ and (\ref{eq:cX}).  
By (\ref{eq:g}),
Lemma \ref{mult} $(vi)$ and $(vii)$ we obtain
\[
   \sum_{n \le x} g(n) f_1(n)
   \sim   x \left(
   \frac{r^2-r}{2} \log x - r P_1( \log(\mbox{$\frac{T}{2 \pi}$}))
   \right)  \ . 
\]
From this we deduce
\begin{equation}
\begin{split}
   T_2 & =   \frac{C_0}{(r^2+r)!}
   \int_{1}^{M} \frac{(\log M/t)^{r^2+r}}{t}
   \left( 
   \frac{r^2-r}{2} \log(t) - r P_1(\Lo)
   \right) \, dt  \\
   & \sim  \frac{C_0}{(r^2+r)!}
   \left(
   \frac{r^2-r}{2} c_{M}(r^2+r,1)- \frac{r}{\theta} c_{M}(r^2+r,0)
   \right) (\log M)^{r^2+r+2}
   \nonumber
\end{split}
\end{equation}
and it follows from~(\ref{eq:cX}) that 
\[
   T_2 \sim  
   \frac{C_0 (\log M)^{r^2+r+2}}{(r^2+r+2)!}
   \left(
   \frac{r^2-r}{2}-
    \frac{r}{\theta} (r^2+r+2)
   \right) \ . 
\]
\subsection{Evaluation of $T_3$}
By Lemma \ref{mult} $(iv)$ it follows that
\[
     T_3 \sim \frac{C_2}{(r^2)!} \sum_{{\begin{substack}{a,b \le M
         \\ (a,b)=1}\end{substack}}}
   \frac{r_1(a,b) f_2(a) f_2(b)}{ab}   
   \left(\log \min \left(\frac{M}{a},\frac{M}{b} \right) \right)^{r^2}  
\]
where $r_1(a,b)$ is defined by~(\ref{eq:r0r1}).
We shall write this last sum as $T_{3}'+T_{3}''$
where $T_{3}'$ is the sum over the terms for which $a< b \le M$ and 
$T_{3}''$ consists of the terms for which $b < a\le M$.  We have
\[
   T_{3}' \sim  \frac{C_2}{(r^2)!} 
   \sum_{b \le M} \frac{f_2(b)\log(M/b)^{r^2}}{b}
   \sum_{{\begin{substack}{a < b 
         \\ (a,b)=1}\end{substack}}}  \frac{(1/2)\Lambda_2(a)-\Lambda(a)
         R_1(\log(T/b))}{a} f_{1}(a)  
\]
since it may be checked that the contribution from the term
$  -\tilde{R}_1 \left(\log \left(T/b \right) \right) \alpha_1(a)
   -\alpha_2(a)$ is $\ll (\log T)^{r^2+r+1}$. 
By Lemma \ref{mult} $(vi)$ and $(vii)$ 
\[
    \sum_{a \le x} f_{1}(a) 
     \left(
(1/2) \Lambda_2 (a) - \Lambda(a) R_1(\log(T/b))
\right) \sim 
\frac{(r^2+r)}{2} x \log x - r R_1(\log(T/b))x
\]
and it follows that 
\begin{equation}
\begin{split}
   T_{3}' & \sim \frac{C_2}{(r^2)!} 
    \sum_{b \le M} \frac{f_2(b)\log(M/b)^{r^2}}{b}
    \int_{1}^{b} \frac{(1/2)(r^2+r)\log t - r R_1(\log(T/b))}{t} \, dt   \\
    & = \frac{C_2}{(r^2)!} 
    \sum_{b \le M} \frac{f_2(b)\log(M/b)^{r^2}}{b}
    \left(
    \frac{r^2+r}{4} (\log b)^2 - r R_1(\log(T/b)) \log b 
    \right)  \ . 
    \nonumber
\end{split}
\end{equation}
By Lemma \ref{mult} $(iii)$
\begin{equation}
\begin{split}
   T_{3}'  & =  \frac{C_2}{(r^2)!} 
     \int_{1}^{M} \frac{ \log(M/t)^{r^2}}{t}
      \left(
    \frac{r^2+r}{4} (\log t)^2 - r R_1(\log(T/t)) \log t 
    \right) \frac{C_1}{(r-1)!} (\log t)^{r-1} \, dt  \\
%    & = \frac{C_0}{(r^2)!(r-1)!}  
%    \left(
%    \frac{r^2+5r}{4} c_{M}(r^2,r+1) - \frac{r \log M}{\theta}
%    c_{M}(r^2,r)
%    \right)  \\
   & =  \frac{C_0 (\log M)^{r^2+r+2}}{(r^2+r+2)!}  
    \left(
    \frac{(r^2+5r)(r+1)r}{4}
    -  \frac{r^2  (r^2+r+2)}{\theta}
    \right) \ .
    \nonumber 
\end{split}
\end{equation}
Next, we consider those terms with $b < a \le M$.   We have
\[
   T_{3}'' \sim  \frac{C_2}{(r^2)!} 
   \sum_{a \le M} 
   \sum_{{\begin{substack}{b < a 
         \\ (a,b)=1}\end{substack}}}  \frac{(1/2)\Lambda_2(a)-\Lambda(a)
         R_1(\log(T/b))}{a} \frac{f_1(a)f_2(b)\log(M/a)^{r^2}}{b}
\]
since we can show, as before, that the contribution from the term 
$  -\tilde{R}_1 \left(\log \left(T/b \right) \right) \alpha_1(a)
   -\alpha_2(a)$ is $\ll (\log T)^{r^2+r+1}$.
Since  $\sum_{b \le x} f_2 (b) \sim \frac{C_1}{(r-1)!} x (\log x)^{r-1}$,  a similar calculation as above yields 
\[
     T_{3}'' \sim \frac{C_0}{(r^2)! r!} 
  \int_{1^{-}}^{M} \frac{\log(M/t)^{r^2} (\log t)^r}{t} 
  d \sigma(t)
\]
with 
$
    \sigma(t) = \sum_{a \le t} 
    \left(
    (\Lambda_{2}(a)/2- \Lambda(a) \log T)+ \frac{r}{r+1} \Lambda(a)
    \log(a)
    \right) f_{2}(a) 
$.
By Lemma \ref{mult} $(vi)$ and $(vii)$ 
$
    \sigma(t) \sim 
    \left(
    \frac{r^2+r}{2}+ \frac{r^2}{r+1}  \right)
    t \log t - rt(\log T)  
$
and thus
\begin{equation}
\begin{split}
   T_{3}'' & \sim 
   \frac{C_0}{(r^2)! r!}  \left(
   \left( \frac{r^2+r}{2}+ \frac{r^2}{r+1} \right) c_M(r^2,r+1)
   -r (\log T) c_{M}(r^2,r)  \right) \\
   & = \frac{C_0 (\log M)^{r^2+r+2}}{(r^2+r+2)!}
   \left(
   \left( \frac{(r^2+r)(r+1)}{2}+r^2 \right)
   - \frac{r(r^2+r+2)}{\theta}
  \right)  \ . 
  \nonumber
\end{split}  
\end{equation}
Collecting our results for $T_1,T_2,$ and $T_3=T_{3}'+T_{3}''$ we have 
\begin{equation}
\begin{split}
   S_{1} & \sim \frac{C_0T(\log M)^{r^2+r+2}}{(r^2+r+2)!}
   \left(
   \frac{(r^2+r+2)(r^2+r+1)}{\theta^2}
   + \left(
   \frac{r(r-1)}{2} - \frac{r(r^2+r+2)}{\theta}
   \right)
   \right.  \\
   &+ \left.
   (r^2+r)\left(
   \frac{r^2+5r}{4}+ \frac{r+1}{2}+ \frac{r^2}{r^2+r}
   \right)  - \frac{(r^2+r+2)(r^2+r)}{\theta} \right)  \\
   & \ge   \frac{C_0T(\log M)^{r^2+r+2}}{(r^2+r+2)!}
 \frac{r^2+r+2}{\theta^2}
 (r^2+r+1-\theta(r^2+2r))  \\
 & \gg \frac{r^4 T(\log M)^{r^2+r+2}}{\theta^2 (r^2+r+2)!}
 \nonumber
\end{split}
\end{equation}
for $0 < \theta < \frac{1}{2}$ and $r \in \mathbb{N}$. 
On the other hand, we have the simple bound 
\[
    S_2 \le \frac{T \Lo}{2 \pi} \sum_{m \le M} \frac{\tau_{r}(m)^2}{m}
    \ll \frac{T}{\theta}  (\log M)^{r^2+1} 
\]
and thus
$
   \max_{T \le \gamma \le 2T} |\zeta'(\rho)| 
   \ge \left|   \frac{S_1}{S_2} \right|
   \gg_{r} (\log M)^{r+1} \gg (\log T)^{r+1}  
$.  $\Box$

\section{Larger values of $\zeta'(\rho)$: Proof of Theorem \ref{large2}}

In this section we shall evaluate $S_1/S_2$ for the choice 
$x_n = \sqrt{n} f(n)$.  Before embarking on this task we will 
require a few results concerning the coefficients $f(n)$.  
Moreover, we shall encounter several other multiplicative functions. 
We define $g$ and $h$ to be multiplicative functions supported on the
squarefree numbers. Their values at any prime $p$ are given by
\[
   g(p) = 1+ f(p)^2 \  \mathrm{and} \ 
   h(p) = 1 + f(p)  p^{-1/2}  \ . 
\]
It will also be convenient
to introduce the notation 
\[
   \mathcal{Q}_1 = \prod_{p} 
   \left(
   1+ f(p)^2  + \frac{f(p)}{\sqrt{p}}
   \right) \ , \
    \mathcal{Q}_2 = \prod_{p} \left(1 + f(p)^2 \right) \ . 
\]

\begin{lemma} \label{reso}
$(i)$
\[
    \sum_{nu \le M} \frac{f(u)f(nu)}{\sqrt{n}} 
    = \mathcal{Q}_1 (1+o(1)) \ , 
\]
$(ii)$
\[
    \sum_{n \le M} f(n)^2 \le \mathcal{Q}_2  \ , 
\]
$(iii)$ 
\[
    \frac{\mathcal{Q}_1}{\mathcal{Q}_2} 
    = \exp \left( 
    \mbox{$\sqrt{\frac{\log M}{\log \log M}}$} (1+o(1)) 
    \right) \ .
\]
$(iv)$  For $i=1,2$
\[
       \sum_{a \le M} \frac{\Lambda_i (a) f(a)}{\sqrt{a}g(a)}  \ll 
       (\log T)^{i/2+\epsilon} \ . 
\]
\end{lemma}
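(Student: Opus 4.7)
The plan is to treat the four assertions separately, exploiting throughout that $f$ is multiplicative, supported on squarefree integers, with prime support in $[L^2,\exp((\log L)^2)]$, and satisfies $f(p)\le 1/(2\log L)=o(1)$ uniformly.

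Part (ii) is immediate: $\sum_{n\le M} f(n)^2 \le \sum_n f(n)^2 = \prod_p(1+f(p)^2)=\mathcal Q_2$ by nonnegativity and multiplicativity. For (i), since $f$ vanishes off squarefrees we have $f(u)f(nu) = f(u)^2 f(n)\mathbf{1}_{(n,u)=1}$. The sum over all coprime pairs (without the size restriction) factors locally: at each prime $p$, the three possibilities $p \nmid nu$, $p\mid u$, $p\mid n$ contribute $1$, $f(p)^2$, $f(p)/\sqrt p$, yielding $\prod_p(1+f(p)^2+f(p)/\sqrt p)=\mathcal Q_1$. The remaining task is to show the tail $nu>M$ contributes $o(\mathcal Q_1)$; I would apply Rankin's trick, attaching $(nu/M)^s$ with a small $s>0$, so the tail is bounded by $M^{-s}\prod_p(1+f(p)^2 p^s+f(p) p^{s-1/2})$. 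Because the support of $f$ is so restricted, the perturbed local factors differ from those of $\mathcal Q_1$ by only a slowly growing multiplier, and one obtains a saving on the order of $M^{-s}$ times a slowly growing factor.

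For (iii), take the logarithm:
\[
\log\frac{\mathcal Q_1}{\mathcal Q_2}=\sum_p\log\!\Bigl(1+\frac{f(p)/\sqrt p}{1+f(p)^2}\Bigr).
\]
Expanding the logarithm and using $|\log(1+x)-x|\ll x^2$ together with $f(p)^2=o(1)$ uniformly reduces this to $\sum_p f(p)/\sqrt p$ up to an absolutely convergent error (controlled by $\sum_p f(p)^2/p$). Substituting $f(p)=L/(\sqrt p\log p)$ gives $L\sum_{L^2\le p\le \exp((\log L)^2)} 1/(p\log p)$, which by partial summation against the prime number theorem equals $L/(2\log L)(1+o(1))$. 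Since $L=\sqrt{\log M\log\log M}$ and $2\log L\sim\log\log M$, the ratio is exactly $\sqrt{\log M/\log\log M}(1+o(1))$.

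For (iv), exploit the supports: $\Lambda$ lives on prime powers, $\Lambda_2$ on integers with $\omega\le 2$, while $f$ lives on squarefrees, so in both cases only a few types of $a$ contribute. For $i=1$ only primes $a=p$ survive and the sum is $\sum_p (\log p) f(p)/(\sqrt p\,(1+f(p)^2))$; with $f(p)=L/(\sqrt p\log p)$ and $1+f(p)^2\sim 1$, this equals $L\sum_p 1/p$ over the support, which is $\ll L\log\log L$ by Mertens, and $L\log\log L\ll \sqrt{\log T\log\log T}\log\log\log T\ll(\log T)^{1/2+\epsilon}$. For $i=2$, the prime contribution uses $\Lambda_2(p)=(\log p)^2$ and is bounded by $L\sum_p(\log p)/p\ll L(\log L)^2$; the semi-prime contribution uses $\Lambda_2(pq)=2\log p\log q$ together with multiplicativity of $g$ on squarefrees ($g(pq)=g(p)g(q)$) to factor as essentially the square of the $i=1$ prime sum, giving $\ll L^2(\log\log L)^2\ll(\log T)^{1+\epsilon}$. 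The main technical obstacle is the Rankin-trick tail bound in (i); once that is in place, the remaining work is bookkeeping with multiplicative functions on squarefrees and applying standard Mertens/PNT-type estimates.
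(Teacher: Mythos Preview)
Your proposal is correct and follows essentially the same approach as the paper: the same squarefree factorization $f(u)f(nu)=f(u)^2 f(n)\mathbf{1}_{(n,u)=1}$ and Rankin's trick for (i), the trivial positivity argument for (ii), the logarithm-plus-Mertens computation for (iii), and the same support analysis of $\Lambda,\Lambda_2$ against squarefrees for (iv). The only point you leave implicit that the paper makes explicit is the choice of exponent in the Rankin step for (i): the paper takes $\alpha=1/(\log L)^3$ and cites Soundararajan for the resulting saving $\exp(-\alpha\log M/\log\log M)$, which is precisely what turns your ``slowly growing factor'' remark into a genuine $o(1)$.
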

\begin{proof}  $(i)$  
We denote the sum to be estimated $\mathcal{S}$. Thus 
\[
   \mathcal{S} = \sum_{n \le M}  \frac{f(n)}{\sqrt{n}} \sum_{{\begin{substack}{u \le M/r 
         \\ (n,u)=1}\end{substack}}} f(u)^2
   =  \sum_{n \le M}  \frac{f(n)}{\sqrt{n}}  
   \left( \prod_{(p,n)=1}(1+f(p)^2)
       - \sum_{{\begin{substack}{u > M/n 
         \\ (n,u)=1}\end{substack}}} f(u)^2 \right) \ . 
\]
By Rankin's trick the error term is bounded by 
\begin{equation} 
  \sum_{n \le M}  \frac{f(n)}{\sqrt{n}}  \left(
    \frac{n}{M} \right)^{\alpha} 
     \sum_{{\begin{substack}{u =1
         \\ (u,n)=1}\end{substack}}} ^{\infty} f(u)^2 u^{\alpha}
   \le \frac{1}{M^{\alpha}} \prod_{p} \left( 
    1 + p^{\alpha} f(p)^2 + f(p) p^{\alpha-1/2} 
    \right)
    \nonumber
\end{equation}
for any $\alpha > 0$.  On the other hand, since $f$ is multiplicative
the main term equals
\[
   \prod_{p}  \left(1+f(p)^2+ \frac{f(p)}{\sqrt{p}} \right)
   + O \left(\frac{1}{M^{\alpha}} 
   \prod_{p}
  \left(1 + f(p)^2 +  \frac{f(p)p^{\alpha}}{\sqrt{p}} \right)  \right)\ .
\]
We deduce 
\begin{equation}
   \mathcal{S} =  \mathcal{Q}_1
   + O \left(\frac{1}{M^{\alpha}} 
   \prod_{p}
    \left(1 + p^{\alpha} f(p)^2 +  \frac{f(p)p^{\alpha}}{\sqrt{p}} \right)  \right) \ .
   \label{eq:Ses}
\end{equation}
However, it is shown in \cite{S} that the ratio of the error term
to the main term in (\ref{eq:Ses}) is $\ll \exp(-\alpha \frac{\log M}{\log \log M})$
for the choice $\alpha = \frac{1}{(\log L)^3}$.  It follows that 
$\mathcal{S} = \mathcal{Q}_1 (1+o(1))$.  \\
$(ii)$  We have the simple identity
\[
   \sum_{n \le M} f(n)^2 \le \sum_{n \ge 1} f(n)^2
   = \mathcal{Q}_2  \ . 
\]
$(iii)$ Note that 
\[
      \frac{\mathcal{Q}_1}{\mathcal{Q}_2} 
      = \prod_{p}
      \left(
      1 +  \frac{f(p)}{\sqrt{p}(1+f(p)^2)}
      \right) \ . 
\]
Taking logarithms of the product  we see that 
\begin{equation}
\begin{split}
    & \log(\mathcal{Q}_1 /\mathcal{Q}_2)
    = \sum_{p} \log \left( 
      1 +  \frac{f(p)}{\sqrt{p}(1+f(p)^2)}
    \right)
    = \sum_{L^2 \le p \le \exp( (\log L)^2)}
    \frac{L}{p \log p(1+o(1))}  \\
    & = \frac{L}{\log L^2} (1+o(1))
    = \sqrt{\frac{\log M}{\log \log M}} (1 + o(1)) \ . 
    \nonumber
\end{split}
\end{equation}
$(iv)$ We have 
\[
   \sum_{a \le M} \frac{\Lambda(a)f(a)}{\sqrt{a}g(a)}
   = L \sum_{p \le M} \frac{1}{p g(p)} \ll L \sum_{p \le M} \frac{1}{p}
   \ll (\log T)^{1/2+\epsilon}  \ . 
\]
Note that $\Lambda_2$ is supported on integers $a$ satisfying 
$\omega (a) \le 2$ and
$f$ is supported on squarefree integers. Moreover  $\Lambda_2 (p) = (\log p)^2$ and  $\Lambda_2 (pq) = 2 
\log p \log q$. From this, we deduce that 
\begin{equation}
\begin{split}
   \sum_{a \le M} \frac{\Lambda_2 (a) f(a)}{\sqrt{a}g(a)}
   &  \ll \sum_{p \le M} \frac{\Lambda_{2}(p)f(p)}{\sqrt{p}g(p)}
    + \sum_{pq \le M, p \ne q} \frac{\Lambda_{2}(pq)f(pq)}{\sqrt{pq}
    g(pq)}  \\
    &  \ll L \sum_{p \le M} \frac{\log p}{p}
    + L^2 \left( \sum_{p \le M} \frac{1}{p} \right)^2 \ll (\log T)^{1+\epsilon}
    \ . 
    \nonumber
\end{split}
\end{equation}
\end{proof}

\noindent {\it Proof of Theorem \ref{large2}.}
We have from Proposition \ref{prop1} that 
\begin{equation}
   \frac{S_1}{S_2} =   \frac{ \frac{1}{2}P_2(\Lo) \Sigma_0
   -  P_1(\Lo) \Sigma_1 - \frac{1}{2}\Sigma_2 + \Sigma_3 + \Sigma_4 
  }{\Lo \Sigma_5-2\Sigma_6 }
   + o(1)
   \nonumber
\end{equation}
where for $i=0,1,2$ 
\[
    \Sigma_i = \sum_{nu \le M} \frac{f(u) f(nu)(\log n)^{i}}{\sqrt{n}} 
\]
and 
\begin{equation}
\begin{split}
    \Sigma_3 & =  \sum_{nu \le M} 
   \frac{f(u)f(nu)(\Lambda*\log)(n)}{\sqrt{n}} \ ,  \\
   \Sigma_4 & =
   \sum_{{\begin{substack}{a,b \le M
         \\ (a,b)=1}\end{substack}}}
   \frac{r_1(a,b)}{\sqrt{ab}}
   \sum_{g \le \min(\frac{M}{a},\frac{M}{b})}  f(ag)f(bg) \ , \\
   \Sigma_{5} & = \sum_{m \le M} f(m)^2  \ , \\
   \Sigma_6 & =\sum_{mn \le M} \frac{\Lambda(n) f(m)f(mn)} {\sqrt{n}}
   \ . 
   \nonumber
\end{split}
\end{equation}
By Lemma \ref{reso} 
\begin{equation}
  \Sigma_{0} = \mathcal{Q}_1 (1 + o(1)) \ \mathrm{and} \ 
  \Sigma_{5} \le \mathcal{Q}_2    (1 + o(1)) \ . 
  \label{eq:sum05}
\end{equation}

We shall prove the following bounds for the other five sums: 
\begin{lemma} \label{sumbds}
We have:
\begin{equation}
\begin{split}
   \Sigma_1  & \ll \mathcal{Q}_1  (\log T)^{1/2+\epsilon}
   \ , \  \Sigma_2, \Sigma_3 \ll \mathcal{Q}_1 (\log T)^{1+\epsilon}
   \ , 
    \\
    \Sigma_4 &  \ll  \mathcal{Q}_1 (\log T)^{3/2+\epsilon} \ , \ 
   \Sigma_6 \ll \mathcal{Q}_2  (\log T)^{1/2+\epsilon}
   \ . 
   \nonumber
\end{split}
\end{equation}
\end{lemma}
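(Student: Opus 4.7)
The plan is to exploit two structural features uniformly: first, that $f$ is multiplicative and supported on squarefrees, so in every sum $f(nu)$ forces $(n,u)=1$ and splits as $f(n)f(u)$; second, that after this splitting the free variable can be summed using the Rankin-trick bound from Lemma \ref{reso}(i,ii), producing a factor of $\mathcal{Q}_2$ together with a local correction $g(n)^{-1}$ at the primes dividing $n$. After this reduction, each $\Sigma_i$ collapses to an Euler-type sum of the form
\[
   \mathcal{Q}_2 \sum_n \frac{f(n)\, w(n)}{\sqrt{n}\, g(n)},
\]
where $w(n)$ is a purely arithmetic weight coming from the original integrand.

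The core estimates that feed everything are prime-sum bounds on the support $[L^2,\exp((\log L)^2)]$ of $f$: first, the basic identity
\[
   \sum_n \frac{f(n)}{\sqrt{n}\, g(n)} = \prod_p\Bigl(1 + \frac{f(p)}{\sqrt{p}\,g(p)}\Bigr) = \frac{\mathcal{Q}_1}{\mathcal{Q}_2}(1+o(1)),
\]
which converts the factor of $\mathcal{Q}_2$ into the desired $\mathcal{Q}_1$, and secondly the weighted prime-sum estimates of Lemma \ref{reso}(iv), together with their squared analogue for $\Lambda_2$, namely $\sum_{pq \le M,\, p\ne q} f(pq)(\log p)(\log q)/(\sqrt{pq}\,g(pq)) \ll (\log T)^{1+\epsilon}$. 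With these in hand I would treat the sums as follows. For $\Sigma_1$ I write $\log n = \sum_{p\mid n}\log p$, extract the prime, and factor the remainder; the resulting bound is the $\Lambda$-case of Lemma \ref{reso}(iv), giving $\Sigma_1 \ll \mathcal{Q}_1(\log T)^{1/2+\epsilon}$. For $\Sigma_2$ I expand $(\log n)^2 = \sum_{p,q\mid n}(\log p)(\log q)$, separating the diagonal $p=q$ (which gives a smaller contribution controlled by the $\Lambda_2(p)$ analogue) from the off-diagonal $p\ne q$; the off-diagonal is exactly the squared prime-sum bound. The convolution $\Lambda*\log$ in $\Sigma_3$ is handled identically because on squarefrees $(\Lambda*\log)(n) = \sum_{p\mid n}(\log p)\log(n/p)$. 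The sum $\Sigma_6$ is the simplest: $\Lambda(n)f(mn)$ forces $n = p$ prime with $p\nmid m$, so it reduces directly to $\mathcal{Q}_2\sum_p f(p)\log p/(\sqrt{p}\,g(p))$, exactly Lemma \ref{reso}(iv) for $i=1$.

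The main obstacle is $\Sigma_4$, because of its genuine two-variable structure and the appearance of the polynomial $R_1(\log(T/b))$ which couples $a$ and $b$. I would first do the inner $g$-sum: for $(a,b)=1$ with $a,b$ squarefree, $f(ag)f(bg) = f(a)f(b)f(g)^2\mathbf{1}_{(g,ab)=1}$, so Lemma \ref{reso}(i)-type majorization gives $\sum_g f(ag)f(bg) \le f(a)f(b)\mathcal{Q}_2/(g(a)g(b))(1+o(1))$. Substituting, and using the crude bound $R_1(\log(T/b)),\tilde R_1(\log(T/b)) \ll \log T$ uniformly in $b \le M$, I split $\Sigma_4$ according to the four terms defining $r_1(a,b)$. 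The $\Lambda_2(a)$ piece factors as $\sum_a \Lambda_2(a)f(a)/(\sqrt{a}g(a))\cdot \sum_b f(b)/(\sqrt{b}g(b)) \ll (\log T)^{1+\epsilon}\cdot \mathcal{Q}_1/\mathcal{Q}_2$, and the $R_1\Lambda$ piece factors as $\log T\cdot(\log T)^{1/2+\epsilon}\cdot \mathcal{Q}_1/\mathcal{Q}_2$, both yielding $\mathcal{Q}_1(\log T)^{3/2+\epsilon}$. The $\alpha_1, \alpha_2$ contributions are absolutely convergent once summed against $f(a)/(\sqrt{a}g(a))$: for $\alpha_1(p)\ll \log p/p$ one gets $\sum_p \log p\cdot f(p)/(p^{3/2}g(p)) \ll L\sum_p p^{-2}$, convergent, and the prime-pair bound $\alpha_2(pq)\ll (\log p)(\log q)(p^{-1}+q^{-1})$ produces only a bounded contribution after pairing with $1/\sqrt{pq}$. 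Thus these are of strictly lower order and absorbed in the error.

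The one place where care is needed is verifying that the error in the Rankin-trick approximation $\sum_{g\le\min(M/a,M/b),(g,ab)=1}f(g)^2 = \mathcal{Q}_2/(g(a)g(b))(1+o(1))$ is uniform in $a,b\le M$; this follows from the same choice $\alpha = 1/(\log L)^3$ as in the proof of Lemma \ref{reso}(i), since replacing $f(p)$ by $f(p)p^{\alpha/2}$ in the Euler factors only affects the local factors at $p\mid ab$ by a bounded ratio, while still giving an overall saving of $\exp(-\alpha\log M/\log\log M) = o(1)$. Once this uniformity is checked, the proof of Lemma \ref{sumbds} is a matter of collecting the five estimates above.
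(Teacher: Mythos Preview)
Your approach is correct and essentially identical to the paper's: isolate the arithmetic weight (the paper writes $(\log n)^i=\sum_{k\mid n}\Lambda_i(k)$, equivalent to your prime expansion on squarefrees), majorize the free inner sum by extending it to infinity to obtain the factor $\mathcal{Q}_2/g(\cdot)$, then use $\sum_n f(n)/(\sqrt{n}\,g(n))=\mathcal{Q}_1/\mathcal{Q}_2$ together with Lemma~\ref{reso}(iv); $\Sigma_4$ and $\Sigma_6$ are treated the same way. The paper is marginally more economical in two spots: it disposes of $\Sigma_3$ in one line via $(\Lambda*\log)(n)\le(\log n)^2$, giving $\Sigma_3\le\Sigma_2$, and it needs no Rankin-uniformity argument since the inner sums are bounded by pure inequalities (simply dropping the constraint $u\le M/k$), not asymptotics with error terms.
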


\noindent Theorem \ref{large2} now easily follows. 
We deduce from~(\ref{eq:sum05}) and Lemma \ref{sumbds} that 
\[
   \mathcal{S}_1 = (1/2) \mathcal{Q}_1 \log^2(\mbox{$\frac{T}{2 \pi}$})
   \left(
   1+ O((\log T)^{-1/2+\epsilon})
   \right) 
\]
and
$
    \mathcal{S}_2 \le 
    \mathcal{Q}_2 \Lo 
    \left( 1+ O((\log T)^{-1/2+\epsilon})    \right)  
$ .
By Lemma \ref{reso} $(iii)$
\[
   \left| \frac{\mathcal{S}_1}{\mathcal{S}_2} \right|
    \ge  (1/2) \Lo
    \frac{\mathcal{Q}_1}{\mathcal{Q}_2}(1+o(1))
    \ge \exp \left(  \mbox{$
    \sqrt{\frac{\log M}{\log \log M}}$} (1+o(1))
    \right)
\] 
and thus we establish Theorem \ref{large2}. $\Box$  \\

\noindent It suffices to prove Lemma \ref{sumbds}.  \\
\noindent {\it Proof of Lemma \ref{sumbds}}. 
We proceed to bound the various $\Sigma_{i}$.  We begin 
with
\[
    \Sigma_{i}  = \sum_{un \le M} \frac{f(u)f(nu) (\log n)^{i}}{\sqrt{n}} 
\]   
for $i=1,2$.   We evaluate this by writing $(\log n)^{i} 
= \sum_{k \mid n} \Lambda_i(k)$.  Inserting this expression we obtain
\begin{equation}
\begin{split}
  & \Sigma_{i}   = \sum_{k \le M} \frac{\Lambda_i (k) f(k)}{\sqrt{k}}
   \sum_{{\begin{substack}{nu \le M/k
         \\ (nu,k)=1 }\end{substack}}} \frac{f(u) f(nu)}{\sqrt{n}}
   \le \sum_{k \le M} \frac{\Lambda_i(k) f(k)}{\sqrt{k}}
     \sum_{{\begin{substack}{n \le M/k
         \\ (n,k)=1 }\end{substack}}} \frac{f(n)}{\sqrt{n}}
          \sum_{{\begin{substack}{u \ge 1 
         \\ (u,kn)=1 }\end{substack}}} f(u)^2  \\
  & \le  \sum_{k \le M} \frac{\Lambda_i(k) f(k)}{\sqrt{k}}
     \sum_{{\begin{substack}{n \le M/k
         \\ (n,k)=1 }\end{substack}}} \frac{f(n)}{\sqrt{n}}
         \prod_{(p,kn)=1} (1+f(p)^2)
   = \mathcal{Q}_2  \sum_{k \le M} \frac{\Lambda_i(k) f(k)}{\sqrt{k}}
     \sum_{{\begin{substack}{n \le M/k
         \\ (n,k)=1 }\end{substack}}} \frac{f(n)}{\sqrt{n} g(kn)} \\
    & \le  \mathcal{Q}_2  \sum_{k \le M} \frac{\Lambda_i(k) f(k)}{\sqrt{k}g(k)}
     \sum_{n=1}^{\infty} \frac{f(n)}{\sqrt{n} g(n)}
     = \mathcal{Q}_2  
     \prod_{p} \left( 1 + \frac{f(p)}{\sqrt{p} g(p)} \right) 
     \sum_{k \le M} \frac{\Lambda_i(k) f(k)}{\sqrt{k}g(k)} \ . 
     \nonumber
\end{split}
\end{equation}
The expression in front of the last sum is clearly $\mathcal{Q}_1$. Thus 
by Lemma \ref{reso} $(iv)$  
\[
   \Sigma_1 \ll \mathcal{Q}_1 (\log T)^{1/2+\epsilon} \ \mathrm{and} \
   \Sigma_2 \ll \mathcal{Q}_2 (\log T)^{1+\epsilon} \ . 
\]
Next note that  $(\Lambda*\log)(r) \le (\log r)^2$
and hence $\Sigma_3 \le \Sigma_2 \ll (\log T)^{1+\epsilon}$. 
Next we estimate $\Sigma_4$:
\begin{equation}
\begin{split}
   \Sigma_4 & =
    \sum_{{\begin{substack}{a,b \le M
         \\ (a,b)=1}\end{substack}}}
   \frac{r_1(a,b)}{\sqrt{ab}}
    \sum_{g \le \min( \frac{M}{a}, \frac{M}{b} )} f(ag) f(bg)  \\
   & \le \mathcal{Q}_1 
    \sum_{{\begin{substack}{a,b \le M
         \\ (a,b)=1}\end{substack}}}
   \frac{f(a)f(b) |r_1(a,b)|}{\sqrt{ab} g(a)g(b)}
     \\
   & \ll \mathcal{Q}_1  \left(
   \sum_{v \le M} \frac{x_v}{\sqrt{v} g(v)} \right)
   \left(
   \sum_{a \le M} \frac{\Lambda_2 (a) f(a)}{\sqrt{a}g(a)}
   + \log T  \sum_{a \le M} \frac{\Lambda(a) f(a)}{\sqrt{a} g(a)} 
   \right)  \\
   & \le \prod_{p} \left(  1+ \frac{f(p)}{\sqrt{p}} + f(p)^2 \right) 
   (\log T)^{3/2+\epsilon}
    \nonumber
\end{split}
\end{equation}
by Lemma \ref{reso} $(iv)$. 
Finally we have
\[ \Sigma_6 = \sum_{ur \le M} \frac{\Lambda(r) f(u) f(ur)}{\sqrt{r}}
  = \sum_{r \le M} \frac{\Lambda(r)f(r)}{\sqrt{r}} 
    \sum_{{\begin{substack}{u \le  M/r
         \\ (u,r)=1}\end{substack}}} 
      f(u)^2  
      \le  \prod_{p}(1+x_{p}^2)  \sum_{r \le M} 
      \frac{\Lambda(r) f(r)}{\sqrt{r}g(r)} \ .
 \]
Once again by Lemma \ref{reso} $(iv)$ we obtain $\Sigma_6 \ll  \mathcal{Q}_2  
\ll (\log T)^{1/2+\epsilon}$. $\Box$

\section{Small values of $\zeta'(\rho)$: Proof of Theorem \ref{small}.}

\noindent {\it Proof of Theorem \ref{small}.}
We begin by noting that Theorem \ref{small} is automatically true if there
are infinitely many multiple zeros.  Now assume that there are only finitely 
many multiple zeros of $\zeta(s)$.  
Suppose there exists a positive constant $C'$ such that
for all $\gamma > C'$ 
all zeros of the zeta function are simple.  We will now show that for each 
$T$ sufficiently large that there exists a $\gamma \in [T,2T]$ such that
\begin{equation}
     |\zeta'(\rho)|^{-1}  \ge 
     \exp \left(c_5(1+o(1)) 
     \mbox{$\sqrt{ \frac{\log T}{\log \log T}  }$} \right)
     \label{eq:lb}
\end{equation}
and Theorem \ref{small} follows.  
We now establish~(\ref{eq:lb}).  Consider the sums
\[
   S_3 = \sum_{T_1 < \gamma < T_2} \zeta'(\rho)^{-1}
   |A(\rho)|^2  \ \mathrm{and} \
   S_{2} = \sum_{T_1 < \gamma < T_2} |A(\rho)|^2 
\]
where $A(s) = \sum_{k \le M} x_{k} k^{-s}$
and $x_k$ is an arbitary real sequence.
Here we choose $T_1,T_2$ such that 
\[
     \zeta(\sigma+iT_j)^{-1} \ll T_{j}^{\epsilon}
\]
where $T_1 = T+O(1)$ and $T_2 = 2T + O(1)$.  
This is possible by 
Theorem 14.16 of \cite{Ti}. 
We shall establish:
\begin{proposition} \label{prop2}
Assume the Riemann hypothesis and that all but finitely many of the zeros of the Riemann zeta function
are simple.  If $||\frac{x_n}{n}||_1 \ll T^{\epsilon}$ 
\[
   S_3 =  \frac{T_2-T_1}{2 \pi} \sum_{hn \le M} \frac{\mu(n)x_{h}x_{nh}}{nh} 
  + O \left(  T^{\epsilon}  ( M||x_n||_{\infty} + 
  ||x_n||_1 + T^{\frac{1}{2}} ||x_{n}^2||_{1}^{\frac{1}{2}})
   \right)   
\]
for $T$ sufficiently large. 
\end{proposition}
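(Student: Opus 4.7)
The plan is to realize $S_3$ as a contour integral of $A(s)A(1-s)/\zeta(s)$ and extract the main term via residues. Under the Riemann hypothesis and the reality of the coefficients, $|A(\rho)|^2 = A(\rho)A(1-\rho)$ for each zero $\rho = 1/2 + i\gamma$ (since $1 - \rho = \overline{\rho}$), and at each simple zero $\Res_{s=\rho}\bigl(A(s)A(1-s)/\zeta(s)\bigr) = A(\rho)A(1-\rho)/\zeta'(\rho)$. Taking $T$ large enough that $T_1 > C'$ excludes all multiple zeros from the range. Setting $c = 1 + 1/\log T$ and letting $R$ be the positively oriented rectangle with vertices $c + iT_1,\ c + iT_2,\ 1-c + iT_2,\ 1-c + iT_1$, the residue theorem yields
\[
    S_3 = \frac{1}{2\pi i}\oint_R \frac{A(s)A(1-s)}{\zeta(s)}\, ds,
\]
and I would obtain the main term from the right vertical side while bounding the other three.

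On the right side $\Re s = c$, I would substitute the absolutely convergent Dirichlet expansions $1/\zeta(s) = \sum_n \mu(n) n^{-s}$, $A(s) = \sum_{k \le M} x_k k^{-s}$, and $A(1-s) = \sum_{k \le M} x_k k^{s-1}$ to arrive at
\[
    \frac{A(s)A(1-s)}{\zeta(s)} = \sum_{n\ge 1}\sum_{k, j \le M} \frac{\mu(n)\, x_k x_j}{j}\Bigl(\frac{j}{nk}\Bigr)^{s}.
\]
The inner integral $\frac{1}{2\pi}\int_{T_1}^{T_2}(j/nk)^{it}\, dt$ equals $T_2 - T_1$ on the diagonal $j = nk$ (setting $h=k$ so that $j = nh$, $nh \le M$, gives precisely the announced main term) and is $O(1/|\log(j/nk)|)$ off-diagonal. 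The standard split of the off-diagonal into a close range $|j - nk| \le j/2$, where $|\log(j/nk)| \gg 1/\max(j, nk)$, and a far range produces exactly the error contributions $T^\epsilon M\|x_n\|_\infty$ and $T^\epsilon \|x_n\|_1$.

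For the horizontal sides $t = T_j$, the special choice of $T_j$ gives $|\zeta(\sigma + iT_j)|^{-1} \ll T^\epsilon$ uniformly for $\sigma \in [1-c,c]$, and pointwise control of $A(s)A(1-s)$ through the hypothesis $\|x_n/n\|_1 \ll T^\epsilon$ renders these segments admissible. For the left vertical side $\Re s = 1-c$, I would apply the functional equation in the form $1/\zeta(s) = \chi(1-s)/\zeta(1-s)$. Since $\Re(1-s) = c > 1$, the factor $1/\zeta(1-s) = \sum_n \mu(n) n^{s-1}$ is pointwise $\ll \log T$, while Stirling gives $|\chi(1-s)| \asymp (T/2\pi)^{1/2-c} \ll T^{-1/2+\epsilon}$. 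Extracting this $T^{-1/2+\epsilon}$ factor and then applying Cauchy--Schwarz together with the Montgomery--Vaughan mean-value estimate $\int_{T_1}^{T_2}|A(1-c+it)|^2\, dt \ll T\|x_n^2\|_1$ produces the final error $T^{1/2+\epsilon}\|x_n^2\|_1^{1/2}$.

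The main obstacle I anticipate is this left vertical integral. Purely pointwise estimation of $A(1-s)$ would only yield a factor $\|x_n\|_1$, which is far too crude; the mean-square bound of Montgomery--Vaughan must enter in an essential way to recover $\|x_n^2\|_1^{1/2}$, and one must juggle the oscillating $\chi$-factor against the slow growth of $1/\zeta(1-s)$ without introducing more than $T^\epsilon$ in extraneous logarithms.
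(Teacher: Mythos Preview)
Your approach is essentially identical to the paper's: both realize $S_3$ via a rectangular contour integral of $A(s)A(1-s)/\zeta(s)$, extract the main term from the right edge $\Re s=c$ by expanding $1/\zeta$ as a Dirichlet series, bound the horizontals pointwise using the choice of $T_j$, and treat the left edge $\Re s=1-c$ via the functional equation combined with Cauchy--Schwarz and the Montgomery--Vaughan mean-value theorem. The only cosmetic difference is that the paper appeals to the machinery behind its Proposition~\ref{prop1} for the right-edge evaluation rather than writing out the diagonal/off-diagonal split explicitly.
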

Moreover by Proposition \ref{prop1} we have
\[
   S_2 = 
    (N(T_2)-N(T_1)) \sum_{m \le M} \frac{x_{m}^2}{m}
    - \frac{T_2-T_1}{\pi} \sum_{m \le M} \frac{(\Lambda*x)(m)x_{m}} {m}
    + o(T)  
\]
respectively.
We now choose $x_m = \sqrt{m} \mu(m) f(m)$ and suppose that 
$M < T^{2/3-10\epsilon}$.  Note that $||x_n||_{\infty} \ll 
M^{\frac{1}{2}+\epsilon}, ||x_n||_1 \ll M^{1+\epsilon}$ and thus
\[
   S_{3} = \frac{T_2-T_1}{ 2\pi}
   \left( \sum_{hn \le M} \frac{f(h) f(nh)}{\sqrt{n}} + 
   o(1) \right)   
\]
and 
\[
   S_{2} =  
   ( N(T_2)-N(T_1))
   \sum_{m \le M} f(m)^2 - \frac{T_2-T_1}{\pi} \sum_{m \le M} 
   \frac{(\Lambda*f)(m) f(m)}{m}
   + o(T) \ . 
\]
The second sum in $S_2$ is bounded by 
\[
   \sum_{mp \le M} \frac{(\log p) f(m) f(mp)}{\sqrt{p}}
  \ll \sum_{p \le b} \frac{\log p f(p)}{\sqrt{p}}
  \sum_{{\begin{substack}{m \le M/p
         \\ (m,p)=1}\end{substack}}}
   f(m)^2  \\
  \ll \left( \sum_{m \le M} f(m)^2 \right)  (\log T)^{1/2+\epsilon}   
  \ . 
\]   
With these observations in hand we obtain
\[
   \max_{T \le \gamma \le 2T}
   |\zeta'(\rho)|^{-1}    
     \ge \Lo^{-1} \left(\frac{  \sum_{hn \le M} \frac{f(h)f(nh)}{\sqrt{n}} 
     }
    {  \sum_{m \le M} f(m)^2}
    \right)
    (1+o(1))
\]
and by Soundararajan's calculation we obtain 
\[
   \max_{T \le \gamma \le 2T}
   |\zeta'(\rho)|^{-1}    \ge 
   \exp \left( (1+o(1))
   \mbox{$ \sqrt{\frac{\log M}{\log \log M}}$}
   \right) 
\]
for $M < T^{\frac{2}{3}-10\epsilon}$ which yields (\ref{eq:lb}).

\noindent It now suffices to establish Proposition \ref{prop2}.  \\

\noindent {\it Proof of Proposition \ref{prop2}.}
We consider the integral 
\[   I :=  \frac{1}{2 \pi i} 
   \int_{c+iT_1}^{c+iT_2} \zeta(s)^{-1} A(s)A(1-s) \, ds  \ . 
\]
with $c =1+ O((\log T)^{-1})$.  Moving the contour left to the $1-c$ line yields
$I=S_3 + H+I'+O(1)$ where
\[
    I' :=  \frac{1}{2 \pi i}
    \int_{1-c+iT_1}^{1-c+iT_2} \zeta(s)^{-1} A(s)A(1-s) \, ds  
\]
and $H$ are the horizontal contributions.   We know from Proposition \ref{prop1} 
that 
\[
    I = \frac{T_2-T_1}{2 \pi} \sum_{nu \le M} \frac{\mu(n)x_{u}x_{nu}}{nu}
   + O(M^{\epsilon}( ||x_n||_{\infty} M + ||x_n||_1)) \ . 
\]
Next we consider the contribution from the horizontal terms.  
%Let $\epsilon = c-1$.  Divide the interval $[1-c,c]$ into the three pieces.
%We have $[-1-\epsilon,-1+\epsilon]$, $ [-1+\epsilon,1-\epsilon]$, and 
%$[1-\epsilon,1+\epsilon]$.  In the interval $-1+\epsilon \le \sigma \le 1-\epsilon$ 
%we have 
%\[
%    |A(\sigma +it)A(1-\sigma-it)|
%   \le \sum_{n \le M} \frac{|x_n|}{n^{\sigma}}
%    \sum_{n \le M} \frac{|x_n|}{n^{1-\sigma}}
%    \le M \left( \sum_{n \le M} \frac{x_n}{n} \right)^2 
%    \ll M^{1+2 \epsilon} \ . 
%\]
%In the other intervals we have
%\[    |A(\sigma +it)A(1-\sigma-it)| 
%    \ll \sum_{k \le M} \frac{|x_k|}{k}
%         \sum_{k \le M} |x_{k}|
 %        \ll M^{1+\epsilon} 
%\] 
We may verify that $|A(s)A(1-s)| \le M ||\frac{x_n}{n}||_{1}^2
+ ||x_n||_1 ||\frac{x_n}{n}||_1$ for $1-c \le \mathrm{Re}(s) \le c$. 
Furthermore, since we have chosen the $T_j$ such that $\zeta(\sigma +iT_{j})^{-1} \ll T_{j}^{\epsilon}$,  $H \ll   T^{\epsilon} (M + ||x_n||_1)$.
We now consider the contribution of the left hand side. 
We have that $\zeta(s)=\chi(s)\zeta(1-s)$.  Since $\chi(s) \asymp
T^{1/2}$ and $\zeta(1-s) \asymp \log T$ for $\mathrm{Re}(s)=1-c$
we have 
\begin{equation}
\begin{split}
  I' &  
  \ll (T^{1/2} \log T)^{-1} ||\mbox{$\frac{x_n}{n}$}||_1
  \int_{T_1}^{T_2} |A(1-c+it)| \, dt   \\
   &  \ll (\log T )^{-1} ||\mbox{$\frac{x_n}{n}$}||_1
    \left(\int_{T_1}^{T_2} |A(1-c+it)|^2 \, dt
  \right)^{1/2}  \ . 
  \nonumber
\end{split}
\end{equation}
The mean value theorem for Dirichlet polynomials asserts
\[
    \int_{T_1}^{T_2} \left| 
    \sum_{n \le N} \frac{a_n}{n^{it}}
    \right|^2 \, dt 
    =  (T_2-T_1) \sum_{n \le N} |a_n|^2 + 
    O  \mbox{$ \left( 
    \sum_{n \le N} n |a_n|^2
    \right) $} \ . 
\]
Since $1-c= O((\log T)^{-1})$
\[
    \int_{T_1}^{T_2} |A(1-c+it)|^2 \, dt 
    \ll  T 
    \sum_{n \le M} x_{n}^2
    + \sum_{n \le M} \frac{x_{n}^2}{n}
    \ll T  ||x_{n}^2||_1  \ . 
\]
Thus we deduce that $I' \ll  T^{\frac{1}{2}}(\log T)^{-1}
||\frac{x_n}{n}||_1 ||x_{n}^2||_{1}^{\frac{1}{2}}$. 
Collecting estimates yields Proposition \ref{prop2}. $\Box$

%\oneappendix % use \appendix if you have more than one appendix
%\section{About the bibliography}
%References in the bibliography should be listed alphabetically by
%the authors' surname(s) and, for the same set of authors, by
%publication year. Detailed formatting (italic, etc.) should be
%avoided; please concentrate on giving full and clear information, such as
%(for books) the \textit{name} and \textit{location} of the publisher and (for a book in
%a book series) the \textit{volume number}. Do not include papers `in
%preparation' in the bibliography; these are better
%mentioned in the main text.

\noindent {\it Acknowledgements}.
The author thanks Professor Soundararajan for suggesting this problem.

\noindent Nathan Ng\\
Department of Mathematics and Statistics \\
University of Ottawa \\
585 King Edward Ave. \\
Ottawa, ON \\
Canada K1N6N5 \\
\email{nng@uottawa.ca}
% Important: Do not put any empty line here.
%\affiliationtwo{% in this example, one author has two addresses}
%   T. Hird\\
%   Previous postal address where
%     the research was performed and\\
%   Country
%   \email{hird@university.ac.uk}}
% Important: Do not put any empty line here.
% Use \affiliationthree{} for any address positioned under \affiliationone
% Use \affiliationfour{}  for any address positioned under \affiliationtwo
%\affiliationthree{~} %inserts a space to make this field empty
%\affiliationfour{%
%   Current address:\\
%   Present long-term address\\
%   Country
%   \email{t.hird@institution.edu}}
%

\begin{thebibliography}{9}% Replace 9 by 99 if 10 or more references
%
% Please note the use of "\and" between author names below
%
%\bibitem{Goddard}
%{\bibname P. Goddard, A. Kent \and D. I. Olive}, `Unitary
%representations of the Virasoro and Supervirasoro algebras', {\em
%Comm. Math. Phys. }103 (1986) 105.
%%
%\bibitem{Lamport}
% {\bibname L. Lamport},
% {\em\LaTeX: A document preparation system {\rm (}updated for
% \LaTeXe{\rm)}} (Addison-Wesley, New York, 1994).
%%
%\bibitem{Lance}
% {\bibname E. C. Lance \and A. Paolucci},
% `Conjugation in braided C$^*$-categories and
% orthogonal quantum groups',
% {\em J. Math. Phys. }41 (2000) 2383--2394.
%%
%\bibitem{Stuart}
%{\bibname J. T. Stuart}, `Mathematics applied in fluid motion',
%{\em Quart. Appl. Math. }56 (1998) 787--796.
%%
%\bibitem{PRL}
% {\bibname
%T.~Prokopec, O.~T\"ornkvist \and R.~P.~Woodard}, `Photon mass
%  from inflation', \emph{Phys. Rev. Lett. }89 (2002) 101301.
\bibitem{BR} 
R. Balasubramanian and K. Ramachandra, 
`On the frequency 
of Titchmarsh's phenomenon for $\zeta(s)$-III', {\em Proc. Indian Acad. Sci.}
86 (1977) no.4 341--351. 

\bibitem{CGGGH} 
J.B. Conrey, A. Ghosh, D. Goldston, S.M. Gonek, and
D.R. Heath-Brown, `On the distribution of gaps between zeros
of the zeta-function',  {\em Quart. J. Math. Oxford (2)}  36  (1985) no. 141
43--51. 

\bibitem{CI} B. Conrey and H. Iwaniec,
`Spacing of zeros of Hecke $L$-functions and the class number problem',
{\em Acta. Arith.} 103 (2002) 259--312. 

\bibitem{Go} 
Steve Gonek, `Some theorems and conjectures in the theory of the Riemann zeta function', unpublished manuscript, Feb. 1999. 

\bibitem{He} Dennis A. Hejhal,
`On the distribution of $\log\vert \zeta'(\frac12+it)\vert $'. 
Number theory, trace formulas and discrete groups (Oslo, 1987), 343--370, Academic Press, Boston, MA, 1989.

\bibitem{Mo} H.L. Montgomery, 
`The pair correlation of zeros of the zeta function', Analytic number theory (Proc. Sympos. Pure Math., Vol. XXIV, AMS, Providence, R.I., 1973 

\bibitem{Mo2} H.L. Montgomery, 
`Extreme values of the Riemann zeta
function',  {\em Comment. Math. Helv.} 52 (1977) no.4  511--518. 

\bibitem{MW} H.L. Montgomery and P.J. Weinberger
, `Notes on small class
numbers', {\em Acta Arith.} 24 (1974) 529--542. 

\bibitem{Ng0} Nathan Ng, `The fourth moment of $\zeta'(\rho)$', 
{\em Duke Mathematical Journal}, 125 (2004) no.2 243--266.  

%\bibitem{Ng1} {\bibname Nathan Ng}, 
%`The distribution of the summatory function
%of the Mobius function', {\em Proc. London Math. Soc. (3)},
%89 (2004) 361--389. 

\bibitem{Ng2} Nathan Ng, 
`Discrete mean values of the Riemann zeta function and Dirichlet 
polynomials', submitted,
\verb*|http://www.mathstat.uottawa.ca/~nng362/RESEARCH/research.html|. 

%\bibitem{Ng3} Ji Bian, Steve M. Gonek, Heekyoung Hahn, Micah B. Milinovich,
%and Nathan Ng, {\em Mean value theorems for long Dirichlet polynomials
%summed over zeros of L-functions}, in preparation. 

\bibitem{Od} 
A. M. Odlyzko, `The $10^{20}$-th zero of the 
Riemann zeta function and 120 million of its neighbors', unpublished 
manuscript, 1992, \\ \verb*|http://www.dtc.umn.edu/~odlyzko/unpublished/index.html|. 

\bibitem{RS1} Z. Rudnick,  K. Soundararajan,  
`Lower bounds for moments of $L$-functions',  Proc. Natl. Acad. Sci. USA  102  (2005) no. 19 6837--6838.

\bibitem{RS2}  Z. Rudnick,  K. Soundararajan, 
 `Lower bounds for moments of $L$-functions: symplectic and orthogonal examples', Multiple Dirichlet Series, Automorphic Forms, and Analytic
Number Theory (Editors: Friedberg, Bump, Goldfeld, and Hoffstein), 
{\em Proc. Symp. Pure Math.},
vol. 75, Amer. Math. Soc., 2006. 

\bibitem{Se} A. Selberg, 
`Note on a paper of L.G. Sathe, J. of the Indian Math. Soc. B', 18 (1954) 83--87.

\bibitem{So0} K. Soundararajan, `The horizontal distribution of zeros of 
$\zeta'(s)$',  {\em Duke Mathematical Journal}, 91 (1998) 33--59. 

\bibitem{S} K. Soundararajan, `Extreme values of $L$-functions at the central point', preprint.

\bibitem{Ti} E.C. Titchmarsh, `The theory of the Riemann zeta function',
2nd ed., Oxford Univ. Press, New York, 1986. 
%MR 0882550


\end{thebibliography}
\end{document}